\def \R{\mathbb{R}}
\def \GW{G\mathcal{W}}
\def \C{\mathbb{C}}
\def \Z{\mathbb{Z}}
\def \D{\mathrm{Diff}}
\def \BSO{\mathop{\mathrm{BSO}}}
\def \BO{\mathop{\mathrm{BO}}}
\def \SO{\mathop{\mathrm{SO}}}
\def \BDiff{\mathop{\mathrm{BDiff}}}
\def \coker{\mathop{\mathrm{coker}}}
\def \coim{\mathop{\mathrm{coim}}}
\def \im{\mathop{\mathrm{im}}}
\def \O{\mathop{\mathrm{O}}}
\def \dim{\mathrm{dim}\,}
\def \id{\mathrm{id}}
\def \A{\mathbf{A}}
\def \B{\mathbf{B}}
\def \X{\mathbf{X}}
\def \Y{\mathbf{Y}}
\def \MSO{\mathbf{MSO}}
\def \K{\mathcal{K}}
\newtheorem{theorem}{Theorem}[section]
\newtheorem{lemma}[theorem]{Lemma}
\newtheorem{proposition}[theorem]{Proposition}
\newtheorem{corollary}[theorem]{Corollary}
\theoremstyle{remark}
\newtheorem{remark}[theorem]{Remark}
\theoremstyle{definition}
\newtheorem{definition}[theorem]{Definition}
\newtheorem{example}[theorem]{Example}
\title{Fold maps, framed immersions and smooth structures}
\author{R. Sadykov}
\thanks{The author has been supported by the
FY2005 Postdoctoral Fellowship for Foreign Researchers, Japan Society for the Promotion of Science; and a 
Postdoctoral Fellowship of Max Planck Institute, Germany}
\address{Department of Mathematics, University of Toronto, Canada}
\email{rstsdk@gmail.com}
\begin{document}
\begin{abstract}
For each integer $q\ge 0$, there is a cohomology theory $\mathbf{A}_1$ such that the zero cohomology group $\mathbf{A}_1^0(N)$ of a manifold $N$ of dimension $n$ is a certain group of cobordism classes of proper fold maps of manifolds of dimension $n+q$ into $N$.  We prove a splitting theorem  for the spectrum representing the cohomology theory of fold maps. For even $q$, the splitting theorem implies that the cobordism group of fold maps to a manifold $N$ is a sum of $q/2$ cobordism groups of framed immersions to $N$ and a group related to diffeomorphism groups of manifolds of dimension $q+1$. Similarly, in the case of odd $q$, the cobordism group of fold maps splits off $(q-1)/2$ cobordism groups of framed immersions.

The proof of the splitting theorem gives a partial splitting of the homotopy cofiber sequence of Thom spectra in the Madsen-Weiss approach to diffeomorphism groups of manifolds. 
\end{abstract}
\maketitle

\section{Introduction}

A point $x\in M$ is said to be a {\it fold singular point} of a smooth map $f:M\to N$ of manifolds of dimensions $n+q$ and $n$ respectively with $q\ge 0$ if there are coordinate neighborhoods of $x$ in $M$ and $f(x)$ in $N$ with respect to which the map $f$ is the product of a Morse function and an identity map, i.e., 
\[
    f: (\R^{n+q}, 0)\longrightarrow (\R^n, 0),
\]
\[
     f\colon (x_1, ..., x_{n+q}) \mapsto (x_1, ..., x_{n-1}, \pm x^2_{n} \pm \cdots \pm x^2_{n+q}).
\]
A map with only fold singular points is called a {\it fold map}.   
Fold maps and their modifications are known to give extensive information about manifolds. They are related to stable spans~\cite{OSS}, \cite{SSS}, smooth structures on $4$-manifolds \cite{SS}, smooth structures on spheres \cite{Sae1}, \cite{Sa5}, foliations~\cite{ElMi}, pseudoisotopies~\cite{Ig} and such theorems in algebraic topology as the Barratt-Priddy and Quillen theorem~\cite{Fu}, standard Mumford conjecture~\cite{MW}, May-Segal theorem~\cite{Va} and Hopf invariant one problem~\cite{KS}.  

For each integer $q\ge 0$, there is a cohomology theory $\mathbf{A}_1$ such that the zero cohomology group $\mathbf{A}_1^0(N)$ of a manifold $N$ of dimension $n$ is a certain group of cobordism classes of proper fold maps of manifolds of dimension $n+q$ into $N$.
In the current paper for each $q\ge 0$ we define cobordism theories  $\B_i(\cdot)$, with $0\le i\le \frac{q+1}{2}$, and two cohomology theories $\X(\cdot)$ and $\Y(\cdot)$. The cobordism theories $\B_i$ are related to stable homotopy groups of classifying spaces of certain Lie groups (see Proposition~\ref{p:4.3}); while the cohomology theory $\X$ is related to diffeomorphism groups of manifolds of dimension $q+1$. The representing spectrum $\mathbf{X}$ is also known as $\mathbf{hV}$, and, specifically for $q=1$, as $\C P^{\infty}_{-1}$ (e.g., see \cite{MW}). 

The first main result of the paper is the splitting theorem for cohomology theory of fold maps. 

\begin{theorem}[Splitting Theorem]\label{main}
The spectrum $\mathbf{A}_1$ of the cobordism theory of fold maps of codimension $-q$ is equivalent to 
\begin{equation}\label{eq:10}
  \mathbf{A}_1 \simeq   \B_1\vee \cdots \vee \B_{q/2} \vee \X 
\end{equation}
if $q$ is even, and to
\begin{equation}\label{eq:11}
 \mathbf{A}_1\simeq    \B_1\vee \cdots \vee \B_{(q-1)/2} \vee \Y 
\end{equation}
if $q$ is odd. 
\end{theorem}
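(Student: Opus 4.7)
The plan is to exploit the natural stratification of fold singularities by their Morse index. A fold singular point has local form $\pm x_n^2 \pm \cdots \pm x_{n+q}^2$ with some number $i \in \{0,1,\dots,q+1\}$ of minus signs; because the quadratic form does not distinguish $i$ from $q+1-i$, the relevant parameter is the unordered pair $\{i,q+1-i\}$. Hence there are $q/2+1$ pair-types for $q$ even and $(q+3)/2$ for $q$ odd (the middle index $(q+1)/2$ being self-paired), matching the number of wedge summands in the statement: $q/2$ nontrivial pairs plus an extremal (definite) pair for even $q$, and $(q-1)/2$ nontrivial pairs plus a combined definite/self-paired contribution for odd $q$.

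First I would realize $\mathbf{A}_1$ as a Thom-style spectrum via an $h$-principle for fold maps: cobordism classes of proper fold maps into $N$ correspond to homotopy classes of sections of a jet-theoretic classifying bundle built out of the local normal forms of folds, with structure groups determined stratum-by-stratum by the stabilizers of the quadratic models. This reduces the theorem to a splitting statement about a purely homotopy-theoretic classifying spectrum.

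Next I would filter $\mathbf{A}_1$ by the maximal allowed index of singularities, producing a tower of spectra whose successive cofibers can be analyzed one stratum at a time. The cofiber corresponding to a non-extreme pair $\{i,q+1-i\}$ with $0<i<q+1$ should be identified with $\B_i$, a Thom spectrum of a virtual bundle over a classifying space of type $B(\O(i)\times \O(q+1-i))$, consistent with the role of $\B_i$ described in the introduction. The extreme stratum $i=0$ (equivalently $i=q+1$) consists of \emph{definite} folds whose local fibers are $(q+1)$-disks; this feeds into the Madsen--Tillmann-type spectrum $\X=\mathbf{hV}$ governing $\BDiff$ of $(q+1)$-disks. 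When $q$ is odd, the self-paired middle index $(q+1)/2$ must be merged with the definite stratum to produce the combined spectrum $\Y$.

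The main obstacle is to upgrade this filtration into a genuine wedge decomposition, i.e., to construct splittings of each cofiber sequence on the spectrum level. I would construct, for each pair $\{i,q+1-i\}$ with $0<i<q+1$, a geometric section $\B_i\to\mathbf{A}_1$: given framed immersion data classified by $\B_i$, explicitly produce a fold map whose only singularities have indices $i$ and $q+1-i$ and whose cobordism class realizes the given datum, and then verify that composition with the projection back onto the $\B_i$-stratum is the identity. The algebraic input for this rests on the orthogonal decomposition of the normal bundle at a fold of index $i$ into $i$ negative-definite and $(q+1-i)$ positive-definite summands; the resulting representation-theoretic splitting should lift to a splitting of the relevant Thom spectra. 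The delicate technical point will be to handle the extreme stratum (and, for odd $q$, the self-paired stratum) so that $\X$ or $\Y$ absorbs exactly the residual data and the total map $\bigvee_i \B_i \vee \X \to \mathbf{A}_1$ (resp.\ with $\Y$) is a stable equivalence, which I would verify by induction on the length of the filtration.
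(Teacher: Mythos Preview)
Your high-level picture---stratify by Morse index, note that indices $i$ and $q+1-i$ are identified, observe that the parity of $q$ governs whether there is a self-paired middle index---is correct and matches the paper's setup. But the mechanism you propose for the splitting is opposite to the paper's and, as written, has a genuine gap.

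You propose to construct \emph{sections} $\B_i \to \A_1$ for each $0<i<\lfloor(q+1)/2\rfloor$ by geometrically realizing framed-immersion data as fold maps. The paper instead constructs a single \emph{retraction} $r\colon \A_1 \to \X$ (Theorem~\ref{l:1}). The key technical input, which your outline does not isolate, is the following: over the fold locus $Z\subset A_1$ there is a canonical cokernel \emph{line} bundle $C_1$, and the intrinsic second derivative gives a nondegenerate quadratic map $K_1\otimes K_1\to C_1$ from the $(q+1)$-dimensional kernel bundle. When $q$ is even, $K_1$ has odd rank, so every fiberwise quadratic form has a majority-sign subspace and hence $C_1$ is canonically trivial (Lemma~\ref{lemma1}). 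This is what permits the construction of a global rank-$(q+1)$ bundle $\chi$ on \emph{all} of $A_1$ restricting to $K_0\oplus\varepsilon$ on the regular stratum and to $K_1$ on the fold stratum (Lemma~\ref{lemma2}); the classifying map of $\chi$ then factors $I\circ(\pi|A_1)$ through $\BSO_{q+1}\simeq X$, yielding the retraction. For odd $q$ the extra $\Z/2$ symmetry at the middle index $s=(q+1)/2$ swaps the two half-dimensional definite subspaces and reverses the sign of $C_1$, so this argument breaks exactly there---which is the real reason the $\B_s$ summand cannot be split off and must be absorbed into $\Y$.

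Your proposed section construction (``given framed immersion data, produce a fold map with prescribed singular locus'') is not obviously well-defined: a framed hypersurface $S\hookrightarrow N$ does not determine a fold map into $N$ without choosing a source manifold and a regular extension away from $S$, and you give no mechanism for making such choices compatibly at the spectrum level. The ``representation-theoretic splitting'' you invoke is not enough by itself; what is actually needed is the gluing of the kernel bundles across the regular and singular strata, and that gluing is governed precisely by the (non)triviality of $C_1$. Without identifying this point, your sketch does not distinguish the even and odd cases beyond a counting observation, and the inductive step you describe cannot be carried out.
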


The existence of the splittings (\ref{eq:10}) and (\ref{eq:11}) significantly clarifies the structure of the spectrum of cobordism groups of fold maps
and remarkably simplifies the study of fold maps (e.g., compare with \cite{An3}). In particular we reduce the study of fold maps to simpler and more familiar objects: the spectrum $\mathbf{X}$ has been studied \cite{GMTW} in relation to diffeomorphism groups of manifolds, while for a study of cobordism theories $\B_i$ we refer to the book of Koschorke \cite{Ko}.   

Recently fold maps were of much interest to topologists in relation to diffeomorphism groups of surfaces (e.g., see the papers \cite{MW} of Madsen and Weiss and \cite{EGM} of Eliashberg, Galatius and Mishachev). Let $F_{g,1}$ be an oriented compact surface of genus $g$ with one boundary component. Then there is an inclusion $F_{g,1}\subset F_{g+1,1}$ and every orientation preserving diffeomorphism of $F_{g,1}$ pointwise trivial on the boundary $\partial F_{g,1}$ extends to a diffeomorphism of $F_{g+1,1}$ as the identity on $F_{g+1,1}\setminus F_{g,1}$. In particular, there is a sequence of inclusions 
\[
    \BDiff F_{0,1}\longrightarrow \BDiff F_{1,1}\longrightarrow \BDiff F_{2,1}\longrightarrow \cdots 
\] 
of classifying spaces whose union is denoted $\BDiff F_{\infty,1}$. The generalized Mumford Conjecture identifies the homotopy type of the Quillen ``+"-cons\-truction $\Z\times \BDiff F_{\infty,1}^+$, it was solved by considering  a map of spectra 
\begin{equation}\label{equ:1}
\mathbf{hW}\longrightarrow \B_0 \vee \B_1\vee \cdots \vee \B_{q+1},
\end{equation}
where $\mathbf{hW}$ is a version of our $\mathbf{A_1}$ but for \emph{cooriented} fold maps.
 
More precisely, Madsen and Weiss solved \cite{MW} the Mumford Conjecture by proving that in the case $q=2$, the homotopy fiber of the map (\ref{equ:1}) is homotopy equivalent to $\Z\times \BDiff F_{\infty,1}^+$.

The proof of the splitting for the cohomology theory of fold maps shows that in the case where $q$ is even, the map of spectra (\ref{equ:1}) considered by Madsen and Weiss partially splits. Namely, the following theorem takes place.

\begin{theorem} For $q$ even, there is a splitting 
\[
 \mathbf{hW}\simeq \B_0 \vee \B_1 \vee \cdots \vee \B_{q} \vee \mathbf{X}.
\]
\end{theorem}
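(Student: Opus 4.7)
The plan is to imitate the proof of Theorem~\ref{main} in the cooriented setting, where the argument is in fact structurally simpler because one never needs to quotient by the $\Z/2$-action that identifies a fold of Morse index $k$ with its negative of index $q+1-k$. More precisely, the proof of Theorem~\ref{main} constructs a filtration of $\mathbf{A}_1$ indexed by the unordered Morse type $\{k,\,q+1-k\}$ of the fold singularity, each graded piece being the Thom spectrum of the bundle data describing the universal local model of a singularity of that type. The same construction in the cooriented category produces a filtration of $\mathbf{hW}$ whose layers are indexed by $k\in\{0,1,\dots,q+1\}$, since the coorientation obstructs the $\Z/2$-identification and all $q+2$ Morse indices remain distinct strata.

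The next step is to identify each graded piece. For $0\le k\le q$, the bundle data at the $k$-th layer is classified, after the Pontryagin--Thom manipulation described in Proposition~\ref{p:4.3}, by the classifying space of the structure group of a cooriented Morse singularity of index $k$, yielding the spectrum $\B_k$. The remaining layer, corresponding to the ``extremal'' Morse type $k=q+1$ (where the automorphism group of the local model reduces to the diffeomorphism group of a $(q+1)$-dimensional fibre and the Madsen--Tillmann identification applies), is identified with the spectrum $\mathbf{X}$ described in the introduction.

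To promote the filtration to a splitting one argues inductively as in Theorem~\ref{main}: for each $k$, one produces a section of the projection $\mathbf{hW}\to\mathbf{hW}/F_{k-1}\mathbf{hW}$ onto the top layer by a Pontryagin--Thom collapse along the stratum of cooriented fold maps whose singular set consists only of type-$k$ folds. The resulting maps $\B_k\to\mathbf{hW}$ and $\mathbf{X}\to\mathbf{hW}$ together exhibit the desired wedge decomposition.

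The main obstacle, as in the unoriented case, lies in the section-construction step: one must realise cobordism classes whose support lies in a single prescribed stratum, which is a transversality plus Gromov-type h-principle argument for cooriented fold maps. This step is however a routine adaptation of its unoriented counterpart, with the coorientation only serving to trivialise the sign choices that complicate the unoriented bookkeeping, so beyond the proof of Theorem~\ref{main} the only genuine content is the verification that the coorientation data is preserved throughout the Pontryagin--Thom manipulation.
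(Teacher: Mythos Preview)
Your proposal has several genuine gaps.

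First, the identification of the ``$k=q+1$ layer'' with $\mathbf{X}$ is incorrect. In the cooriented filtration of $\mathbf{hW}$ by Morse index, the subquotient at index $q+1$ is $\B_{q+1}$, not $\mathbf{X}$; its local symmetry group is $\SO_{q+1}$, not any diffeomorphism group. The spectrum $\mathbf{X}$ arises instead as the \emph{subspectrum} of $\mathbf{hW}$ consisting of regular germs together with the single stratum of index $q+1$ (or equivalently index $0$) folds; by Lemma~\ref{l:3.6} this inclusion of the fold stratum into $\mathbf{X}$ is already a homotopy equivalence. The relevant cofiber sequence is therefore $\mathbf{X}\to\mathbf{hW}\to\B_0\vee\cdots\vee\B_q$, and splitting it means producing a \emph{retraction} $\mathbf{hW}\to\mathbf{X}$, not sections $\B_k\to\mathbf{hW}$.

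Second, and more seriously, you never explain where the hypothesis that $q$ is even enters. In the paper this is the entire content of the Retraction Theorem (Theorem~\ref{l:1}): when $q$ is even the canonical cokernel line bundle $C_1$ over the fold locus is trivial (Lemma~\ref{lemma1}), which allows one to glue the kernel bundles over the regular and fold strata into a global $(q+1)$-dimensional bundle $\chi$ over $A_1$ (Lemma~\ref{lemma2}), and hence to factor the stable classifying map $I\circ\pi|A_1$ through $\BSO_{q+1}\simeq X$. This bundle-theoretic factorization is the retraction. When $q$ is odd the argument breaks precisely at the stratum of index $(q+1)/2$, where $C_1$ is nontrivial.

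Third, your proposed mechanism---``Pontryagin--Thom collapse along a stratum'' together with an h-principle---produces maps in the wrong direction. Collapsing the complement of a tubular neighborhood of the index-$k$ stratum yields the projection $\mathbf{hW}\to\B_k$, not a section $\B_k\to\mathbf{hW}$. No transversality or h-principle argument is involved in the paper's proof of the splitting; those tools are used only to identify the cobordism group with $[\,\cdot\,,\mathbf{A}_1]$ (Theorem~\ref{th:0}), whereas the splitting itself is a purely bundle-theoretic statement at the level of spectra.
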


A referee pointed out that a version of our splitting theorem was also proved by Ando in \cite[Propositions 10.3 and 10.4]{An4}. Our works are independent and the result first appeared in our paper \cite{Sad0}.

\subsection*{Applications}
The results of this paper has been used to  compute rational Morin cobordism groups~\cite{Sa1}. 


\subsection*{Acknowledgment} The author is thankful to O. Saeki and B. Kalmar for comments and questions; Proposition~\ref{l:15} and Proposition~\ref{p:5.2} were proved in attempt to answer some of their questions.

\section{Cobordism groups of maps with prescribed singularities}\label{s:2}

In this section we recall the definition and properties of spectra for cobordism groups of maps with singularities of prescribed types. Except for Examples~\ref{ex:1}, \ref{ex:2}, in this paper we will tacitly assume that the dimension of the target manifold $N$ of cobordism groups under consideration is greater than $1$.

\subsection{Singularity types}
For a smooth map $f:M\to N$ of manifolds, a point $x\in M$ is said to be {\it singular} if the rank of the differential $df$ at $x$ is less than the minimum of $\dim M$ and $\dim N$. A non-singular point of $f$ in $M$ is called {\it regular}. The \emph{dimension} of $f$ is defined to be the number $\dim M-\dim N$. Two continuous maps $f,g\colon X\to Y$ of topological spaces define the same \emph{map germ} at $x\in X$ if there is a neighborhood $U\subset X$ of $x$ such that $f|U=g|U$. 

For $i=1,2$, let $X_i$ and $Y_i$ be smooth manifolds. We say that a smooth map germ $f_1: X_1\to Y_1$ at $x_1$ has the same \emph{singularity type} as a smooth map germ $f_2: X_2\to Y_2$ at $x_2$ if there are neighborhoods $U_i\subset X_i$ of $x_i$ and $V_i\subset Y_i$ of $f_i(x_i)$ and diffeomorphism germs $\alpha, \beta$ that fit a commutative diagram of map germs
\[
\begin{CD}
(U_1\times \R^t, x_1\times\{0\}) @>\alpha>> (U_2\times \R^s, x_2\times \{0\}) \\
@Vf_1\times\id_{\R^t} VV @V f_2\times \id_{\R^s}VV \\
(V_1\times \R^t, f_1(x_1)\times\{0\}) @>\beta>> (V_2\times \R^s, f_2(x_2)\times \{0\}),
\end{CD}
\]
where $\id_{\R^t}$ and $\id_{\R^s}$ are the identity maps of $\R^t$ and $\R^s$ respectively. We note that the singularity type of a map germ determines the dimension of the map germ, but it does not determine the dimensions of the source and target manifolds. 

Given a prescribed set $\tau$ of singularity types of map germs of dimension $q$, a {\it $\tau$-map} is defined to be a map of dimension $q$ each map germ of which is of singularity type $\tau$. We say that a set $\tau$ of singularity types is {\it open} if a $C^{\infty}$-slight perturbation of any $\tau$-map is also a $\tau$-map.  In particular, every non-empty open set of singularity types contains the singularity type of a regular map germ. An {\it orientation} on a $\tau$-map $f: M\to N$ is defined to be a choice of orientation on the normal bundle $f^*TN\ominus TM$ of $f$, where $TM$ and $TN$ stand for the tangent bundles of $M$ and $N$ respectively, and $f^*TN\ominus TM$ is a stable vector bundle of dimension $-q$ (see Definition~\ref{d:v}). 

In this paper we will consider only oriented $\tau$-maps. 

Two proper $\tau$-maps $f_i: M_i\to N$, with $i=0,1$, are said to be {\it cobordant} if there is a proper $\tau$-map $f: M\to N\times [0,1]$ of a manifold $M$ with boundary $\partial M=M_0\sqcup (-M_1)$ such that $f|M_i=f_i$ for $i=0,1$, and the restriction of $f$ to collar neighborhoods of $M_0$ and $M_1$ in $M$ can be identified with the disjoint union of maps 
\[
    f_0\times \id\colon M_0\times [0, \delta)\longrightarrow N\times [0, \delta) 
\]    
and 
\[
    f_1\times \id\colon M_1\times (1-\delta, 1]\longrightarrow N\times (1-\delta, 1], 
\]    
by means of respective identifications of $M_0\times [0, \delta)$ and $M_1\times (1-\delta, 1]$ with collar neighborhoods of $M_0$ and $M_1$ in $M$. Here
 $\id$ is the identity map of an appropriate space, and $\delta>0$ is a sufficiently small real number.
 Cobordism classes of proper $\tau$-maps into a manifold $N$ form a monoid with respect to disjoint union which by the Grothendieck construction 
can be extended to a group called the {\it cobordism group of $\tau$-maps} into $N$. 

\begin{remark} The construction of a cohomology theory associated with $\tau$-maps in subsection~\ref{subs:2.2} shows that in general one should consider not the Grothendieck group completion but a certain localization of the monoid of cobordism classes of $\tau$-maps. For fold maps the two approaches yield the same result, but, for example, for coverings the results are different (for details, see \cite{Sady}).
\end{remark}

Cobordism groups of $\tau$-maps into smooth manifolds form a group valued functor. It is defined on the category of smooth manifolds without boundary and dimension $0$-embeddings.  Its value on a manifold $N$ is the cobordism group of $\tau$-maps to $N$. 

This functor is contravariant. Let $f\colon M\to N$ be a proper $\tau$-map representing an element of the cobordism group of $\tau$-maps into $N$. Then a dimension $0$-embedding $N'\subset N$ gives rise to a proper $\tau$-map $f|f^{-1}(N')$ representing an element of the cobordism group of $\tau$-maps into $N'$. In turn, the correspondence $f\mapsto f|f^{-1}(N')$ determines a homomorphism of cobordism groups of $\tau$-maps, which is the value of the contravariant functor on the inclusion $N'\subset N$. We will see later (by Theorem~\ref{th:0} below) that under certain conditions this functor can be extended to take values on continues maps, which is somewhat surprising.

\begin{example}\label{ex:1} The cobordism group of non-singular maps of dimension $q$ into a one-point-space $P$ is a free group whose generators are in bijective correspondence with diffeomorphism types of manifolds of dimension $q$. 
\end{example}

\begin{example}\label{ex:2}
The cobordism group of fold maps of dimension $q$ into a one-point-space $P$ is isomorphic to the cobordism group $\Omega_q$ of Thom of oriented manifolds of dimension $q$ under the isomorphism associating to the cobordism class of a fold map $M\to P$ the cobordism class of $M$. 
\end{example}

\subsection{Cohomology theory of $\tau$-maps}\label{subs:2.2}
For a fairly general choice of $\tau$ the functor of the cobordism group of $\tau$-maps actually admits an extension to a generalized cohomology theory (see Theorem~\ref{th:0} below) represented by a spectrum $\Sigma_{\tau}$. 

The spectrum $\Sigma_{\tau}$ is defined for an arbitrary not necessarily open set $\tau$ of singularity types. Let $\xi_t: E_t\to \BSO_t$ be the universal vector bundle of dimension $t$ over the space $\BSO_t$ of oriented vector subspaces of $\R^{\infty}$ of dimension $t$. Let $S_t=S_t(\tau)$ denote the space of map germs at $0$ of maps $f\colon \R^{t+q}\to E_t$ such that 
\begin{itemize}
\item the image of $f$ belongs to a single fiber $E_t|b$ of $\xi_t$ over some point $b\in \BSO_t$,
\item $f(0)$ is the zero in the vector space $E_t|b$, and
\item the map germ $f\colon (\R^{t+q}, 0)\to (E_t|b, 0)$ is a smooth $\tau$-map germ.  
\end{itemize}
The space $S_t$ is endowed with an obvious topology so that the map $\pi_t: S_t\to \BSO_t$ that takes $f$ onto $b$ has a structure of a fiber bundle. Equivalently, the space $S_t$ is the total space of the fiber bundle given by the Borel construction
\[
      {\mathop\mathrm{ESO}}_t\times_{SO_t} S^0_t\longrightarrow {\BSO}_t,
\]
where ${\mathop\mathrm{ESO}}_t\to \BSO_t$ is the universal principle $\SO_t$-bundle, and $S^0_t\subset C^{\infty}(\R^{t+q},\R^t)$ is the subspace of smooth maps $f$ such that $f(0)=0$ and the map germ of $f$ at $0$ is a $\tau$-map germ. Here an element $g\in \SO_t$ acts on the space $C^{\infty}(\R^{t+q},\R^t)$ of smooth maps, and therefore on $S^0_t$, by sending a map $f$ onto the composition $g\circ f$. 
The $(t+q)$-th term of the spectrum $\Sigma_{\tau}$ is
defined to be the Thom space $T\pi_t^*\xi_t$ of the bundle $\pi_t^*\xi_t$
over $S_t$.

The cohomology theory of ${\tau}$-maps can also be described in terms of stable vector bundles. 

\begin{definition}\label{d:v}
A \emph{stable vector bundle} of dimension $q$, with $q\in \Z$, over a CW complex $X$ is defined to be a map $X\to \{q\}\times \BO$. Every stable vector bundle $f\colon B\to \{-q\}\times\BO$ of dimension $-q$ determines a cohomology theory $h^*$; the $(n+q)$-th term of the spectrum of $h^*$ is the Thom space of the vector bundle $(f|B_n)^*\xi_n$ where $B_n=f^{-1}(\BO_n)$. Similarly, an oriented stable vector bundle over a CW complex $X$ of dimension $q$ is defined to be a map $f\colon B\to \{-q\}\times \BSO$, and every oriented vector bundle  determines a cohomology theory. 

 Two stable vector bundles over a CW complex $X$ are \emph{isomorphic} if they are homotopic as maps $X\to\{q\}\times \BO$. For a finite CW complex $X$ isomorphism classes of stable vector bundles over $X$ are in bijective correspondence with equivalence classes of pairs $(\xi, \eta)$ of vector bundles over $X$ with $q=\dim \xi-\dim \eta$, where two pairs $(\xi,\eta)$ and $(\xi',\eta')$ are equivalent if $\xi\oplus\eta'\oplus \varepsilon^r$ is isomorphic to $\xi'\oplus \eta\oplus \varepsilon^r$ for some trivial vector bundle $\varepsilon^r$ over $X$. Isomorphic stable vector bundles give rise to equivalent cohomology theories. To simplify notation we will use the same notation for stable vector bundles and their equivalence classes. The stable vector bundle of a pair $(\xi,\eta)$ is denoted by $\xi\ominus\eta$.  
\end{definition}

To describe the stable vector bundle corresponding to the cohomology theory of $\tau$-maps, consider commutative diagrams
\[
   \begin{CD}
   S_t@>j_t>> S_{t+1}\\
   @V\pi_tVV @VV\pi_{t+1}V \\
   \BSO_t @>i_t>>\BSO_{t+1}
   \end{CD}
\]
where $t$ ranges over non-negative integers, the map $i_t$ is the canonical inclusion, and the top horizontal map $j_t$ sends a map germ $f\colon \R^{t+q}\to E|b$ onto the map germ of the composition
\[
   \R^{t+q+1}\equiv \R^{t+q}\times \R\stackrel{f\times \id_{\R}}\longrightarrow E_t|b\times \R\stackrel{\approx}\longrightarrow E_{t+1}|i_t(b),
\] 
(for details see \cite{Sa}). The last isomorphism in the composition is given by the splitting $\xi_{t+1}|\BSO_t\approx\xi_t\oplus\varepsilon$. Here and below $\varepsilon$ stands for the trivial line bundle over an appropriate space. We will denote the direct limit $\lim S_t$ of the inclusions $j_t$ by $S(\tau)$ and the map $\lim \pi_t$ by $\pi(\tau)$. Thus, $\pi(\tau)$ is a map $S(\tau)\to \{-q\}\times \BSO$. We will write $\pi$ for $\pi(\tau)$ if $\tau$ is understood.    

For future reference we reformulate the definition of the cohomology theory $\Sigma_{\tau}$. 

\begin{proposition}\label{r:3.1} The cohomology theory $\Sigma_{\tau}$ corresponds to the oriented stable vector bundle $\pi\colon S(\tau)\to \{-q\}\times \BSO$ of dimension $-q$. 
\end{proposition}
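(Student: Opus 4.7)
The plan is to show that the spectrum $\Sigma_{\tau}$ built term-by-term from the Thom spaces $T\pi_t^*\xi_t$ coincides, up to canonical isomorphism, with the Thom spectrum associated via Definition~\ref{d:v} to the stable vector bundle $\pi\colon S(\tau)\to \{-q\}\times \BSO$. Since both sides are defined as sequential Thom spectra, this reduces to matching three pieces of data: (i) the filtration of the base by finite-dimensional pieces, (ii) the vector bundles whose Thom spaces provide the terms, and (iii) the bonding maps. The proposition should then follow directly; there is no genuinely hard step, only bookkeeping.

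First, I would observe that the map $\pi$ naturally carries the structure of a filtered map: by construction $S(\tau)=\colim S_t$ and $\pi|S_t=\pi_t\colon S_t\to\BSO_t$, which identifies $\pi^{-1}(\{-q\}\times \BSO_t)$ with $S_t$. Hence, according to Definition~\ref{d:v}, the $(t+q)$-th term of the spectrum associated with the stable bundle $\pi$ is the Thom space of $(\pi|S_t)^*\xi_t=\pi_t^*\xi_t$. This is literally the definition of the $(t+q)$-th term of $\Sigma_{\tau}$, so the terms agree on the nose.

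Next, I would check that the bonding maps match. The spectrum associated with a stable vector bundle of dimension $-q$ has bonding maps obtained from the identification $i_t^*\xi_{t+1}\cong \xi_t\oplus\varepsilon$: namely, pulling this isomorphism back along $\pi_{t+1}|S_t$ yields an isomorphism between $\pi_t^*\xi_t\oplus\varepsilon$ and the restriction of $\pi_{t+1}^*\xi_{t+1}$ to $S_t$, and the bonding map is the induced composite
\[
\Sigma T\pi_t^*\xi_t\longrightarrow T(\pi_{t+1}^*\xi_{t+1}|S_t)\longrightarrow T\pi_{t+1}^*\xi_{t+1}.
\]
The commutative square in the construction of $j_t$ shows that $\pi_{t+1}\circ j_t=i_t\circ \pi_t$, and the very formula defining $j_t$ (take $f\mapsto (f\times\id_\R)$ under the splitting $\xi_{t+1}|\BSO_t\cong\xi_t\oplus\varepsilon$) is nothing other than the canonical lift of this splitting along $\pi_t$. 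Thus the bonding map of $\Sigma_{\tau}$, induced by $j_t$, agrees with the bonding map prescribed by the stable bundle $\pi$.

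Finally, I would address orientability: the bundles $\xi_t$ are oriented because we work over $\BSO_t$, which is exactly what makes the classifying map $\pi$ land in $\{-q\}\times \BSO$ rather than $\{-q\}\times \BO$, so the resulting cohomology theory is the oriented one. The only point requiring any care is the compatibility of the splittings $\xi_{t+1}|\BSO_t\cong \xi_t\oplus\varepsilon$ as $t$ varies, which is standard and is precisely what makes $\pi\colon S(\tau)\to \{-q\}\times\BSO$ well defined as a map into the stable classifying space. Once these identifications are in place, the two spectra have the same terms and the same structure maps, which establishes the proposition.
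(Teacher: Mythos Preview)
Your proposal is correct: you carefully verify that the termwise definition of $\Sigma_{\tau}$ and the Thom spectrum of the stable bundle $\pi$ have the same spaces and the same bonding maps, which is exactly what the statement asserts. In the paper itself the proposition is presented as a reformulation of the definition and is given without proof, so your argument simply makes explicit the bookkeeping that the paper leaves to the reader.
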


\section{Submersions}\label{ex:3}
In this section we show that the spectrum $\mathbf{A_0}$ of regular maps, i.e., the spectrum $\Sigma_{\tau}$ where $\tau$ contains only the singularity type of a regular map germ, is equivalent to the spectrum defined in the paper \cite{GMTW}, i.e., the spectrum corresponding to the stable vector bundle $\BSO_q\to \{-q\}\times \BSO$ classifying $-\xi_q$.  

In this case the space $S_t(\tau)$ will be denoted by $A_{0,t}$. 

\begin{lemma}\label{lemma:3.1} The space $A_0=\lim A_{0,t}$ is homotopy equivalent to $\BSO_t$.
\end{lemma}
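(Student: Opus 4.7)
The plan is to identify $A_{0,t}$ explicitly with an oriented Grassmannian, compatibly with the stabilization maps $j_t$, so that $A_0 = \colim_t A_{0,t}$ becomes $\BSO_q$. (I read the statement as $\BSO_q$ rather than $\BSO_t$: the colimit has been taken in $t$, and the discussion preceding the lemma identifies $\A_0$ with the spectrum classifying $-\xi_q$, whose zeroth space is $\BSO_q$.)

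The strategy proceeds in three steps. First, I would reduce the fiber $S^0_{0,t}$ of submersion germs to linear data. The rescaling $f_s(x) = s^{-1}f(sx)$ extends continuously at $s=0$ to the derivative $df_0$, giving an $\SO_t$-equivariant deformation retraction of $S^0_{0,t}$ onto the space $L^{\mathrm{surj}}(\R^{t+q},\R^t)$ of linear surjections; equivariance is immediate since the post-composition action of $\SO_t$ commutes with rescaling in the source. Second, polar decomposition supplies a further $\SO_t$-equivariant retraction $\rho(f) = (ff^*)^{-1/2} f$ from $L^{\mathrm{surj}}$ onto the Stiefel manifold $V_t(\R^{t+q}) = \{u : uu^* = I_t\}$ of orthonormal $t$-frames, equivariance being the identity $(Af)(Af)^* = A(ff^*) A^{-1}$ for $A \in \SO_t$. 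Third, since $\SO_t$ acts freely on $V_t(\R^{t+q})$ with orbit space the oriented Grassmannian $\widetilde{\mathrm{Gr}}_t(\R^{t+q})$, the Borel construction reduces to
\[
A_{0,t} \;\simeq\; \mathrm{ESO}_t \times_{\SO_t} V_t(\R^{t+q}) \;\simeq\; \widetilde{\mathrm{Gr}}_t(\R^{t+q}) \;=\; \widetilde{\mathrm{Gr}}_q(\R^{t+q}),
\]
the last identification being orthogonal complement (which is compatible with orientations inherited from the standard orientation of $\R^{t+q}$).

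It remains to match the stabilization $j_t : A_{0,t} \to A_{0,t+1}$ — which sends a germ $f$ to $f \times \id_{\R}$ — with the standard inclusion $\widetilde{\mathrm{Gr}}_q(\R^{t+q}) \hookrightarrow \widetilde{\mathrm{Gr}}_q(\R^{t+q+1})$. Under the identifications above, $\ker d(f \times \id_{\R})_0 = \ker df_0 \subset \R^{t+q} \subset \R^{t+q+1}$, and the orientation of the kernel is unchanged, so this is precisely the standard Grassmannian inclusion. Taking the colimit then gives $A_0 \simeq \widetilde{\mathrm{Gr}}_q(\R^\infty) = \BSO_q$. I expect the main technical obstacle to lie in this compatibility check: the definition of $j_t$ uses the splitting $\xi_{t+1}|\BSO_t \approx \xi_t \oplus \varepsilon$, and one must verify that this splitting intertwines correctly with the scaling and polar retractions and with the orientation conventions on frames and their orthogonal complements. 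A more conceptual alternative is to invoke Phillips' $h$-principle and observe that $\A_0$ classifies oriented rank-$q$ bundles (the vertical tangent bundle of a submersion), which identifies $A_0$ with $\BSO_q$ directly; but the concrete Grassmannian model above is the one compatible with the definition of the spectrum in subsection~\ref{subs:2.2}.
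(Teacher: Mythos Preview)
Your proof is correct and follows essentially the same strategy as the paper's: reduce the germs to their linear part, then identify the resulting space with the oriented Grassmannian of $q$-planes in $\R^{t+q}$ and pass to the colimit. The execution of the second step differs slightly. The paper maps $A_{0,t}$ to the Grassmannian directly via $f\mapsto\ker(df|0)$ and observes that the fiber over a $q$-plane $L$---namely the space of linear isomorphisms from $L^\perp$ to the fibers $E_t|b$ of $\xi_t$---is contractible; you instead retract $\SO_t$-equivariantly onto the Stiefel manifold via polar decomposition and then quotient by the free $\SO_t$-action, reaching the Grassmannian of $t$-planes before taking orthogonal complements. Your version makes the scaling retraction and the compatibility with $j_t$ explicit (the paper leaves both to the reader), and you are right that the conclusion should read $\BSO_q$.
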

\begin{proof}
The space $A_{0,t}$ is homotopy equivalent to the total space of the fiber bundle over $\BSO_t$ with fiber over $b\in \BSO_t$ given by the space of surjective homomorphisms $\R^{t+q}\to E_t|b$ of vector spaces. There is a fibration map $p$ from $A_{0,t}$ to the Grassmannian manifold $G_q(\R^{t+q})$ of subspaces of $\R^{t+q}$ of dimension $q$. It takes a point $f$ in $S_t$ onto the point in the Grassmannian manifold corresponding to the kernel of $df$ at $0\in \R^{t+q}$. Let $L\subset \R^{t+q}$ be a subspace of dimension $q$, it corresponds to a point in $G_q(\R^{t+q})$. Let $L^{\perp}\subset \R^{t+q}$ denote the subspace orthogonal to $L$. Then the fiber of $p$ over $L$ consists of all isomorphisms from $L^{\perp}$ to the fibers of $\pi_t$; hence, the fibers of $\pi_t$ are contractible.    
Clearly $A_0=\lim A_{0,t}$ is the classifying space $\BSO_q$. 
\end{proof}

Next we need to determine the homotopy class of the stable vector bundle map $\pi|A_0\colon A_0\to \{-q\}\times \BSO$.  Our argument (which we will repeatedly use later) is in terms of vector bundles over spaces $S_t(\tau)$, where $\tau$ consists of a single singularity type. Let us recall that for a smooth map $f: M\to N$ of manifolds, there is an exact sequence 
\begin{equation}\label{e:1}
    0\longrightarrow \ker(df)\longrightarrow TM\stackrel{df}\longrightarrow f^*TN\longrightarrow \coker(df)\longrightarrow 0
\end{equation}
of vector bundles where all vector bundles are restricted to a subset $\Sigma$ of $M$ of singular points of $f$ of a single prescribed type. This sequence splits and we have isomorphisms of vector bundles
\begin{equation}
     TM\longrightarrow \ker(df)\oplus \coim(df) \qquad \mathrm{over } \quad \Sigma,
\end{equation}
\begin{equation}\label{e:3}
    \coker(df)\oplus \im(df)\longrightarrow f^*TN \qquad \mathrm{over } \quad \Sigma,
\end{equation}
and a fiberwise isomorphism of vector bundles $\coim(df)\to \im(df)$. There are similarly defined vector bundles over $S_t(\tau)$ if $\tau$ consists of a single singularity. For example, the cokernel bundle is defined to be the subbundle of $\pi_t^*\xi_t$ with fiber over $f\in S_t$ given by the pullback of the cokernel of $df$, while the kernel bundle is defined to be the subbundle of $\pi_t^*\varepsilon^{t+q}$ with fiber over $f$ given by the pullback of the kernel of $df$. Here $\varepsilon^{t+q}$ is the trivial vector $(t+q)$-bundle over $\BSO_t$. The kernel and cokernel bundles over $S_t$ give rise to the canonical kernel and cokernel bundles over $S(\tau)$.

For example, the canonical kernel bundle $K_0$ over $A_0$ is isomorphic to the universal vector bundle of dimension $q$ over $\BSO_q$ under an obvious isomorphism covering the colimit $A_0\to \BSO_q$ of fibrations $p$ constructed in the proof of Lemma~\ref{lemma:3.1}.  

We recall that the space $\BSO$ is the colimit of Grassmann manifolds $G_{i,j}$ of $i$-planes in $\R^{i+j}$, and there is an involution $I$ on $\BSO$ described in~\cite{St} given by the colimit of maps $G_{i,j}\to G_{j,i}$ taking an $i$-plane onto its orthogonal $j$-plane.

\begin{proposition}\label{l:2} The map $\pi|A_0$ is defined by the composition 
\[ 
      {\BSO}_q \longrightarrow \{q\}\times \BSO \longrightarrow \{-q\}\times \BSO 
\]
of the inclusion and the involution $I$. In other words the map $\pi|A_0$ is a stable vector bundle over $\BSO_q$ isomorphic to the negative of the canonical kernel bundle over $A_0$.
\end{proposition}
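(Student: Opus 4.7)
The plan is to read off the stable class of $\pi | A_0$ directly from the splitting of the differential exact sequence~(\ref{e:1}) applied to submersion germs, and then to match it with the involution description.

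First I would specialize sequence~(\ref{e:1}) to the universal family over $A_{0,t}$. A point of $A_{0,t}$ is a submersion germ $f\colon(\R^{t+q},0)\to (E_t|b,0)$, so $\coker(df)=0$ at every point, and~(\ref{e:1}) becomes a short exact sequence of vector bundles over $A_{0,t}$
\[
0\longrightarrow K_{0,t}\longrightarrow \varepsilon^{t+q}\longrightarrow \pi_t^*\xi_t\longrightarrow 0,
\]
where $\varepsilon^{t+q}$ represents $T_0\R^{t+q}$ (the target of $df$ at the origin is $T_0(E_t|b)\cong E_t|b$, classified by $\pi_t^*\xi_t$) and $K_{0,t}$ is the canonical kernel bundle. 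Since the sequence splits, we get a bundle isomorphism $K_{0,t}\oplus \pi_t^*\xi_t\cong \varepsilon^{t+q}$ over $A_{0,t}$, hence $\pi_t^*\xi_t\cong -K_{0,t}$ in reduced $K$-theory.

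Next I would pass to the colimit as $t\to\infty$. By Proposition~\ref{r:3.1}, the stable class of $\pi|A_0\colon A_0\to\{-q\}\times\BSO$ is represented by $\pi_t^*\xi_t$, so the previous step yields $\pi|A_0\simeq -K_0$ as a stable vector bundle of dimension $-q$ over $A_0$. Under the identification $A_0\simeq\BSO_q$ of Lemma~\ref{lemma:3.1}, $K_0$ corresponds to the universal oriented $q$-bundle, so the classifying map of $K_0$ is precisely the inclusion $\BSO_q\hookrightarrow\{q\}\times\BSO$.

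To finish, I would identify the involution $I$ as stable negation. The map $G_{i,j}\to G_{j,i}$ sending an $i$-plane to its orthogonal complement pulls back the universal $j$-bundle on $G_{j,i}$ to the orthogonal complement of the universal $i$-bundle on $G_{i,j}$; in the colimit this shows that $I\colon\{q\}\times\BSO\to\{-q\}\times\BSO$ classifies $-\xi$ when applied to a bundle classified by $\xi$. Composing, the map $\BSO_q\hookrightarrow\{q\}\times\BSO\xrightarrow{I}\{-q\}\times\BSO$ classifies $-K_0$, which is the same stable vector bundle as $\pi|A_0$, and therefore the two maps are homotopic.

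The only real obstacle is bookkeeping: ensuring that the splitting in~(\ref{e:1}) is canonical enough to produce a genuine vector bundle isomorphism over $A_{0,t}$ (which is what Koschorke-style stable bundle arguments require), and that the stabilization maps $j_t$ and $i_t$ are compatible with the kernel construction so that the identifications $K_{0,t}\to K_{0,t+1}$ restrict to the universal inclusions $\BSO_q\to\BSO_q$ in the limit. Once this naturality is confirmed, both steps above are essentially formal.
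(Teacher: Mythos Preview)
Your argument is correct and follows essentially the same route as the paper: both proofs write down the short exact sequence $0\to K_{0,t}\to (\pi_t|A_{0,t})^*\varepsilon^{t+q}\to (\pi_t|A_{0,t})^*\xi_t\to 0$ coming from the differential, read off the stable identity $\pi_t^*(\xi_t\ominus\varepsilon^{t+q})\simeq -K_{0,t}$, and then invoke Proposition~\ref{r:3.1} together with the identification of $K_0$ with the universal $q$-bundle over $\BSO_q$. Your additional paragraph unpacking why $I$ implements stable negation makes explicit a step the paper leaves to the reader, and the bookkeeping concerns you flag are exactly the ones that need checking but pose no real difficulty.
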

\begin{proof} The $(t+q)$-th space of the spectrum $\mathbf{A_0}$ is the Thom space of the vector bundle 
$(\pi_t|A_{0,t})^*\xi_t$. Hence the map $\pi_t|A_{0,t}$ classifies the stable vector bundle $\pi_t^*(\xi_t\ominus \varepsilon^{t+q})$ of dimension $-q$ over $A_{0,t}$ (see Proposition~\ref{r:3.1}). On the other hand, there is an exact sequence of vector bundles
\[
     0\longrightarrow K_{0,t}\longrightarrow (\pi_t|A_{0,t})^*\varepsilon^{t+q} \longrightarrow (\pi_t|A_{0,t})^*\xi_t\longrightarrow 0, 
\]
where $K_{0,t}$ is the canonical kernel bundle over $A_{0,t}$ and the third map over a point $f\in A_{0,t}$ is induced by the differential $df$. In particular, the stable vector bundle $-K_{0,t}$ of dimension $-q$ is isomorphic to the stable vector bundle $(\pi_t|A_{0,t})^*(\xi_t\ominus \varepsilon^{t+q})$. Since the inclusion of $\BSO_q$ into $\{q\}\times \BSO$ classifies the canonical kernel bundle $K_0$ over $A_0$, this implies the statement of the lemma. 
\end{proof}

\begin{corollary} The spectrum $\mathbf{A_0}$ coincides with the spectrum of the stable vector bundle $-\xi_q: \BSO_q\to \{-q\}\times \BSO$ of dimension $-q$. 
\end{corollary}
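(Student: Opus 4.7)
The plan is to assemble the Corollary as an immediate consequence of the three preceding results. By Proposition~\ref{r:3.1}, the spectrum $\mathbf{A_0}$, being $\Sigma_{\tau}$ for $\tau$ the singularity type of a regular map germ, corresponds precisely to the oriented stable vector bundle $\pi|A_0 \colon A_0\to \{-q\}\times\BSO$. It therefore suffices to exhibit a homotopy equivalence of $A_0$ with $\BSO_q$ under which the classifying map $\pi|A_0$ becomes identified with $-\xi_q$.

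First I would invoke Lemma~\ref{lemma:3.1}, which supplies the equivalence $A_0\simeq \BSO_q$ via the colimit of the fibrations $p\colon A_{0,t}\to G_q(\R^{t+q})$ with contractible fibers. Under this equivalence, as already remarked after the proof of Lemma~\ref{lemma:3.1}, the canonical kernel bundle $K_0$ over $A_0$ corresponds to the universal bundle $\xi_q$ over $\BSO_q$. Then I would apply Proposition~\ref{l:2}, which identifies the stable vector bundle $\pi|A_0$ with $-K_0$ as a map to $\{-q\}\times \BSO$. Combining these two identifications gives $\pi|A_0\simeq -\xi_q$ as stable vector bundles of dimension $-q$ over $\BSO_q$.

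Finally, since equivalent (oriented) stable vector bundles give rise to equivalent cohomology theories, hence to equivalent representing spectra (Definition~\ref{d:v} and Proposition~\ref{r:3.1}), the spectrum $\mathbf{A_0}$ coincides with the spectrum associated to $-\xi_q\colon \BSO_q\to \{-q\}\times\BSO$, as claimed. There is no real obstacle here: all the substantive work has been carried out in Lemma~\ref{lemma:3.1} and Proposition~\ref{l:2}, and the Corollary is simply the translation of those two statements into the language of representing spectra via Proposition~\ref{r:3.1}.
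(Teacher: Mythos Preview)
Your proof is correct and follows exactly the route the paper intends: the Corollary is stated in the paper without proof, as an immediate consequence of Lemma~\ref{lemma:3.1}, the remark identifying $K_0$ with $\xi_q$, and Proposition~\ref{l:2}, combined via Proposition~\ref{r:3.1}. You have simply made this chain of implications explicit, and there is nothing to add or correct.
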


\section{Properties of the spectrum $\Sigma_{\tau}$} In this section we list properties of $\Sigma_{\tau}$. In particular, we recall the homotopy type of $S(\tau)$ in the case where $\tau$ consists of a single singularity type, and the isomorphism class of the canonical kernel bundle over $S(\tau)$. 

For the next theorem we need the notion of an open set $\tau$ of singularity types being \emph{$\mathcal{K}$-invariant} (e.g., see \cite{Sa}). This condition is satisfied when $\tau$ consists of the singularity types of fold and regular map germs. We will not recall the definition here, but quote the following theorem.

\begin{theorem}\label{th:0} Let $\tau$ be an open set of $\K$-invariant singularities imposed on maps of dimension $q\ge 0$. Suppose that $\tau$ contains fold singularities. Then for every closed manifold $N$ of dimension at least $2$, the cobordism group of $\tau$-maps into $N$ is isomorphic to the limit 
\[
    \Sigma_{\tau}(N)\colon = [\mathbf{S}\wedge N_+, \Sigma_{\tau}] =\lim_{t\to \infty} [S^{t}\wedge N_+, [\Sigma_{\tau}]_{t}]
\]  
of groups of pointed homotopy classes of maps, where $\mathbf{S}$ is the sphere spectrum, and $[\Sigma_{\tau}]_t$ is the $t$-th space $T\pi_{t-q}^*\xi_{t-q}$ of the spectrum $\Sigma_{\tau}$. 
\end{theorem}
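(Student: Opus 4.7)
The plan is a Pontryagin--Thom argument supplemented by a Gromov-type h-principle. I would define a homomorphism $\Phi$ from the cobordism group of $\tau$-maps into $N$ to $\Sigma_{\tau}(N)$ by embedding and collapsing, and a set-theoretic inverse $\Psi$ by transversality; the h-principle for $\K$-invariant open singularities then matches the two constructions and turns the inverse into one lying in genuine $\tau$-cobordism classes.

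For $\Phi$, start with a proper oriented $\tau$-map $f\colon M\to N$. For $t$ sufficiently large, choose a proper embedding $\iota\colon M\hookrightarrow N\times\R^{t+q}$ over $N$, i.e.\ with $\pi_N\circ\iota=f$; by general position such $\iota$ exist and are unique up to isotopy. Let $\nu\to M$ be the rank-$t$ normal bundle of $\iota$, oriented via the stable isomorphism $\nu\ominus\varepsilon^{t+q}\cong f^*TN\ominus TM$ combined with the chosen orientation of $f$. At each $x\in M$, stabilizing the $\tau$-map germ of $f$ at $x$ by a trivial factor of dimension $t-n$ and transporting the target into $\nu_x$ yields a $\tau$-map germ $(\R^{t+q},0)\to(\nu_x,0)$, hence an element $\widetilde{\iota}(x)\in S_t$ lying above $\nu_x\in\BSO_t$. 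This defines a lift $\widetilde{\iota}\colon M\to S_t$ of the Gauss map of $\nu$. Composing the Pontryagin--Thom collapse $S^{t+q}\wedge N_+\to T\nu$ with the Thom-space map induced by $\widetilde{\iota}$ gives a map to $T\pi_t^*\xi_t=[\Sigma_{\tau}]_{t+q}$, representing $\Phi([f])\in\Sigma_{\tau}(N)$. Isotopy invariance of embeddings and tubular neighborhoods, combined with applying the same construction to a $\tau$-cobordism in $N\times[0,1]$, show $\Phi$ is a well-defined group homomorphism.

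For $\Psi$, start with $\alpha\colon S^{t+q}\wedge N_+\to T\pi_t^*\xi_t$ and perturb it to be smooth and transverse to the zero section $S_t$. Set $M:=\alpha^{-1}(S_t)$; this is a closed oriented submanifold of $N\times\R^{t+q}$ of dimension $n+q$, with normal bundle canonically identified with $(\alpha|_M)^*\pi_t^*\xi_t$. Let $f:=\pi_N\circ\iota_M\colon M\to N$, where $\iota_M$ is the inclusion. The map $\alpha|_M\colon M\to S_t$ prescribes pointwise a $\tau$-map germ at each $x\in M$ compatible with the differential $df$; equivalently, it endows $f$ with a \emph{formal} $\tau$-structure that need not be realized by $f$ itself.

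The principal obstacle is improving this formal structure to a genuine one. Since $\tau$ is open, $\K$-invariant, and contains fold singularities, the h-principle of Ando (compare \cite{Sa}) applies and furnishes a homotopy, through formal $\tau$-maps, from $(f,\alpha|_M)$ to a holonomic formal $\tau$-map, i.e.\ to a genuine $\tau$-map $\widetilde{f}\colon M\to N$; we set $\Psi([\alpha])=[\widetilde{f}]$. A parametric (relative) version of the same h-principle applied to a homotopy of $\alpha$---producing a $\tau$-cobordism between the two resulting manifolds---shows $\Psi$ is independent of the perturbation and depends only on the homotopy class of $\alpha$. A direct check that the Pontryagin--Thom construction applied to $\widetilde{f}$ recovers $\alpha$ up to homotopy, and vice versa, completes the proof that $\Phi$ and $\Psi$ are mutually inverse. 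The assumption $\dim N\ge 2$ guarantees the ambient room needed for the general-position embedding step and for the h-principle to apply.
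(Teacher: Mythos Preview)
The paper does not prove this theorem: it is quoted from the author's earlier work \cite{Sa} (see the remark immediately following the statement). Your outline is exactly the Pontryagin--Thom plus h-principle strategy carried out there, so at the level of architecture there is nothing to contrast.

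One step is imprecise as written. Your lift $\widetilde{\iota}\colon M\to S_t$ is described as ``stabilize the germ of $f$ by $\R^{t-n}$ and transport the target into $\nu_x$''; this presupposes a bundle isomorphism $\nu\cong f^*TN\oplus\varepsilon^{t-n}$, which need not exist (from the embedding one only gets a short exact sequence $0\to f^*TN\to\nu\to\nu_j\to 0$, where $\nu_j$ is the normal bundle of the auxiliary embedding $j\colon M\hookrightarrow\R^{t+q}$, and $\nu_j$ is generally nontrivial). The construction that actually works is intrinsic: for each $x$ take the germ at $\iota(x)$ of the composite of the inclusion of the vertical fiber $\{f(x)\}\times\R^{t+q}\hookrightarrow N\times\R^{t+q}$ with a tubular retraction onto $\nu$ followed by projection to the fiber $\nu_x$. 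A local-coordinate check shows this germ has the same singularity type as $f$ at $x$ (it is a stabilization of $\pm f$), hence lies in $S_t$. With this correction your $\Phi$ is well defined, and the remainder of your sketch---transversality for $\Psi$, the Eliashberg/Ando h-principle to upgrade formal $\tau$-maps to genuine ones, and its relative form to show $\Psi$ is well defined on homotopy classes---matches the argument in \cite{Sa}. Your remark on $\dim N\ge 2$ is correct insofar as the h-principle is concerned; the embedding step itself needs no such hypothesis.
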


\begin{remark} In the stated form Theorem~\ref{th:0} is proved by the author in \cite{Sa}. A dual version (for bordisms of $\tau$-maps) of Theorem~\ref{th:0} with an essentially different spectrum is established by Ando in \cite{An4}. Theorem~\ref{th:0} is true for maps of negative dimension as well. In fact in this case it is true even without the assumption that $\tau$ contains fold singularities, and in this case versions of Theorem~\ref{th:0} are due to Ando~\cite{An4}, the author~\cite{Sa}, and Sz\H{u}cs~\cite{Sz1}. The assumption on fold singularities is not necessary also in the case $q=0$; and the assumptions $\dim N\ge 2$ and that $N$ is closed can be omitted \cite{Sady}. The spectrum $\Sigma_{\tau}$ is dual to a spectrum introduced by Eliashberg in \cite{El} (which is an equivariant spectrum for bordisms of $\tau$-maps) and is a generalization of Wells spectrum for immersions~\cite{We} and the spectrum $\A_0$ of Galatius, Madsen, Tillmann and Weiss~\cite{GMTW} (see subsection~\ref{ex:3}). It is also closely related to two general constructions of classifying spaces by Rimanyi-Sz\H{u}cs~\cite{RS} and Kazarian~\cite{Kaz}, \cite{SY}. 
\end{remark}

\begin{example} For the set $\tau$ of fold singularity types and the singularity type of a regular map germ, we will denote the spectrum $\Sigma_{\tau}$ by $\A_1$. If $N$ is a smooth closed manifold, then the group $\A_1^0(N)=\Sigma_{\tau}(N)$ is isomorphic to the cobordism group of oriented fold maps into $N$.  
\end{example}


\begin{remark} The generalized cobordism theory of $\Sigma_{\tau}$ associates cohomology groups not only to closed manifolds, but also to general topological spaces, and, in particular, to open smooth manifolds. We caution the reader that some authors define cobordism groups of fold maps into open manifolds $N$, and in particular into $\R^n$, in a way different from ours; namely, not as $\Sigma_{\tau}(N):=\lim_t[S^t\wedge N_+, [\Sigma_{\tau}]_t]$, where $N_+$ stands for $N\sqcup \{*\}$, but as $\Sigma_{\tau}(\dot{N}, *)$, where $\dot{N}$ denotes a one point compactification of $N$. 
\end{remark}

Let $\tau$ be the singularity type of a finitely determined map germ $h\colon (\R^{t+q}, 0)\to (\R^t, 0)$ such that for every map germ $f$ of singularity type $\tau$ there are local coordinates with respect to which $f=h\times \id$, where $\id$ is the identity map germ of $(\R^k, 0)$ for some non-negative integer $k=k(f)$.  For $i \ge 0$ let $\D(\R^i, 0)$ denote the group of diffeomorphism
germs of $(\R^i, 0)$. The group $\mathop\mathrm{Stab}(h)$ is defined to be the subgroup of the group $\D(\R^{t+q}, 0) \times \D(\R^t, 0)$ of elements $(\alpha, \beta)$ such that $\beta\circ h\circ\alpha^{-1}=h$. The \emph{relative symmetry group} of $\tau=\tau(h)$ is a maximal compact subgroup of the group
\begin{equation}\label{eq:group}
\{\,(\alpha, \beta) \in \mathop\mathrm{Stab}(h)\, |\, \det(d\alpha|0) \cdot \det(d\beta|0) > 0 \,\},
\end{equation}
where $d\alpha$ and $d\beta$ are differentials of $\alpha$ and $\beta$ respectively. By the J\"{a}nich-Wall theorem~\cite{Ja}, \cite{Wa}, a maximal compact subgroup of (\ref{eq:group}) exists and any two maximal compact subgroups are conjugate. The \emph{right} and \emph{left} \emph{representations} of the relative symmetry group of $\tau$ are its representations on $(\R^{t+q}, 0)$ and $(\R^t, 0)$ given by the restrictions of the projections of $\D(\R^{t+q}, 0)\times \D(\R^t, 0)$ onto the first and second factors respectively. 

\begin{lemma}[Kazarian-Sz\H ucs, \cite{Kaz}, \cite{Sz1}]\label{l:0} The space $S(\tau)$ is homotopy equivalent to the classifying space $BG$ of the relative symmetry group $G$ of the singularity type $\tau$. 
\end{lemma}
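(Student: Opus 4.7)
The plan is to identify $S(\tau)$ as the classifying space of a principal $G$-bundle, by combining orbit-stabilizer for the action of $\D(\R^{t+q}, 0) \times \D(\R^t, 0)$ on the space of $\tau$-germs with the Borel construction that defines $S_t$, and then passing to the direct limit. Because $\tau$ consists of a single singularity type represented by the finitely-determined germ $h$, every $\tau$-germ of the relevant dimension has the form $\beta \circ (h \times \id_{\R^k}) \circ \alpha^{-1}$; thus $S^0_t$ is a single orbit of the product group $\D(\R^{t+q}, 0) \times \D(\R^t, 0)$, and orbit-stabilizer identifies it with the homogeneous space $\bigl(\D(\R^{t+q}, 0) \times \D(\R^t, 0)\bigr)/\mathop\mathrm{Stab}(h \times \id)$.

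Combining this with the Borel-construction description of $S_t$ gives
\[
  S_t \;\simeq\; \bigl(\mathop\mathrm{ESO}_t \times \D(\R^{t+q}, 0) \times \D(\R^t, 0)\bigr)\big/\bigl(\SO_t \times \mathop\mathrm{Stab}(h \times \id)\bigr),
\]
where $\SO_t$ acts on $\mathop\mathrm{ESO}_t$ by the principal action and on $\D(\R^t, 0)$ by left translation (via the embedding $\SO_t \hookrightarrow \D(\R^t, 0)$), while $\mathop\mathrm{Stab}$ acts by right translation on the two $\D$-factors; these two actions commute. Taking 1-jets at $0$ gives a deformation retraction $\D(\R^i, 0) \to \mathrm{GL}(i, \R)$, and polar decomposition retracts further onto $\O(i)$; this lets one replace each $\D$-factor by the corresponding $\O$. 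The orientation condition on the normal bundle $f^*TN \ominus TM$ cuts the stabilizer down to the positive-determinant subgroup in display (\ref{eq:group}), whose maximal compact is the group $G$, well-defined up to conjugacy by J\"anich--Wall; the inclusion of the maximal compact is a homotopy equivalence at the level of stabilizers.

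Passing to the direct limit $t \to \infty$, the contractibility of $\mathop\mathrm{ESO}_t$ allows one to divide out the free principal $\SO_t$-action, and what survives is a contractible total space carrying the commuting residual $G$-action by right translation through the right and left representations. Since this $G$-action is free, $S(\tau) = \lim_t S_t$ is the base of a principal $G$-bundle with contractible total space; the recognition principle for classifying spaces yields the stated homotopy equivalence $S(\tau) \simeq BG$.

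The main obstacle is the bookkeeping: one must verify that the two residual $G$-factors obtained from right translation correspond precisely to the right and left representations as defined in the statement, that the free $\SO_t$-action is absorbed correctly as $t\to\infty$ without leaving spurious components (this is where the orientation reduction to the $\det>0$ subgroup plays its role), and that the 1-jet deformation retraction $\D(\R^i, 0) \to \O(i)$ is compatible with the restriction to stabilizers (so that the homotopy type of $\mathop\mathrm{Stab}(h)$ agrees with that of its linear part, namely $G$). These verifications are essentially formal consequences of the commutativity of left and right translations, but they require care with the stabilization maps $S_t \to S_{t+1}$.
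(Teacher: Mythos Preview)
The paper does not supply its own proof of this lemma; it is stated with attribution to Kazarian and Sz\H{u}cs and immediately followed by an example. So there is no in-paper argument to compare against.

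Your outline is the standard argument from the cited references: recognize $S^0_t$ as the orbit of $h\times\id$ under the left--right action of the germ groups, rewrite the Borel construction as a double quotient by $\SO_t$ and the stabilizer, pass to maximal compacts via J\"anich--Wall, and identify the limit as a model for $BG$. The shape is correct.

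Two remarks on the gaps you flag. First, the deformation retraction $\D(\R^i,0)\simeq \O(i)$ of the ambient groups does not by itself yield $\mathrm{Stab}(h)\simeq G$; that the stabilizer deformation retracts onto its maximal compact is precisely the content of the J\"anich--Wall theorem (the stabilizer is a semidirect product of a compact group with a contractible factor), not a consequence of taking $1$-jets upstairs. You invoke the right fact but attribute it to the wrong mechanism. Second, the orientation bookkeeping is where the condition $\det(d\alpha|0)\cdot\det(d\beta|0)>0$ earns its keep: after absorbing the left $\SO_t$-action into $\D(\R^t,0)\simeq\O_t$, the quotient $\mathrm{ESO}_t\times_{\SO_t}\O_t$ has two contractible components indexed by the sign of $\det$, and the residual stabilizer action permutes them by $\mathrm{sign}\det(d\beta|0)$. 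Matching this against the $\mathrm{sign}\det(d\alpha|0)$-dependence on the source side is exactly what singles out the subgroup in display~(\ref{eq:group}) and collapses the two components to one. This is the ``spurious components'' issue you mention, and it should be written out explicitly rather than deferred to ``essentially formal'' verification.
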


\begin{example} A map $f:M\to N$ is said to be an \emph{immersion} if it is a map of non-positive dimension $q=-d$  and $\mathop{\mathrm{rank}}df=\dim M$. In this case the space $S_t(\tau)$ is homotopy equivalent to the total space of the fiber bundle over $\BSO_t$ with fiber over $b\in \BSO_t$ given by the space of $(t-d)$-frames in the vector space $E_t|b$. Clearly $S_t=\BSO_d$ for $t\ge d$ and $S(\tau)=BG$ for the symmetry group $G=\SO_d$.  
\end{example}

\begin{remark}\label{r:2.5}
The relative symmetry groups of fold singularities are computed for example in the paper ~\cite{Sa1} and can be described as follows. For non-negative integers $a,b$, let $\SO(a,b)$ denote the group of orientation preserving elements of the product of orthogonal groups $\O_a\times \O_b$. 
Then the relative symmetry group $G$ of the fold singularity of index $a$ is a subgroup of $\O_{a+b}$ (or $\O_{2a}$) generated by 
\begin{itemize}
\item $\SO(a,b)$, where $a+b=q+1$, if $a\ne b$;   
\item $<\SO(a,b), r_a\circ h_a>$, if $a=(q+1)/2$ and $a$ is even; and
\item $<\SO(a,b), h_a>$, if $a=(q+1)/2$ and $a$ is odd.
\end{itemize}
Here, for $a>0$, the element $h_a$ stands for the transformation in $\O_{2a}$ that exchanges the two factors of $\R^a\times \R^a$; and $r_a$ is a fixed element in $\O_{2a}$ that reflects the first factor of $\R^a\times \R^a$ along a hyperplane. 
\end{remark}

\begin{lemma}\label{l:2.4}
In the case where $\tau$ is any fold singularity or the singularity type of a regular map germ, the canonical kernel bundle over $S(\tau)$ is the universal vector $G$-bundle over $BG$. 
\end{lemma}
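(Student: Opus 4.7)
The plan is to combine Lemma~\ref{l:0}, which identifies $S(\tau)\simeq BG$, with an explicit analysis of how the canonical kernel bundle transforms under this identification. The regular case is already contained in Proposition~\ref{l:2} (where $G=\SO_q$ and the kernel bundle is shown to be the universal vector bundle over $\BSO_q$), so the real content is the fold case.

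First, I would recall the Borel-construction description $S_t\simeq \mathop{\mathrm{ESO}}_t\times_{\SO_t} S^0_t$ and unpack the Kazarian--Sz\H ucs equivalence. Since $\tau$ consists of a single singularity type with model germ $h$, the subspace $S^0_t\subset C^{\infty}(\R^{t+q},\R^t)$ deformation retracts onto the orbit of $h$ under the combined actions of left diffeomorphism germs and the $\SO_t$-action; after stabilizing in $t$ and taking a maximal compact subgroup one obtains the identification $S(\tau)\simeq BG$ by the standard homogeneous-space argument.

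Second, I would note that the canonical kernel bundle over $S(\tau)$ is, by its very definition as a subbundle of $\pi_t^*\varepsilon^{t+q}$ with fiber $\ker(df|_0)$ over $f$, the bundle associated via the Borel construction to a representation of $G$ on $\ker(dh|_0)$. Indeed, if $(\alpha,\beta)\in G$ satisfies $\beta\circ h\circ \alpha^{-1}=h$, then differentiating at $0$ yields $d\beta|_0\circ dh|_0=dh|_0\circ d\alpha|_0$, so the right representation $d\alpha|_0$ preserves $\ker(dh|_0)$ and the canonical kernel bundle is the $G$-bundle associated to this restricted representation.

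Third, I would verify that this restricted representation is the defining one. For a fold germ of index $a$, the model $h$ has $\ker(dh|_0)$ equal to the $(q+1)$-dimensional subspace on which the Morse quadratic form $-x_n^2-\cdots -x_{n+a-1}^2+x_{n+a}^2+\cdots+x_{n+q}^2$ is defined. By Remark~\ref{r:2.5}, the relative symmetry group $G$ is generated by $\SO(a,b)\subset \O_{a+b}=\O_{q+1}$ (together, in the balanced case $a=b$, with the extra element $h_a$ or $r_a\circ h_a$ acting on $\R^a\times\R^a$); in each case $G$ is defined precisely as a subgroup of $\O_{q+1}$ acting on $\ker(dh|_0)$, and the right representation on the kernel recovers this defining inclusion. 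Consequently the canonical kernel bundle is the universal vector $G$-bundle over $BG$.

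The main obstacle lies in the last step: checking that for the balanced case $a=(q+1)/2$, the ``extra'' generators $h_a$ and $r_a\circ h_a$, which have nontrivial target components $\beta$, restrict on the source to the expected elements of $\O_{q+1}$ acting on $\ker(dh|_0)$. This amounts to a direct but slightly fiddly computation using the explicit form of $h$ and the parity of $a$, and is the only place where the case analysis of Remark~\ref{r:2.5} genuinely enters.
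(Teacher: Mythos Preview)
Your proposal is correct and follows essentially the same approach as the paper: both arguments observe that the canonical kernel bundle is the bundle associated via the Borel construction to the $G$-action on $\ker(dh|_0)$, and then verify by inspection of the list in Remark~\ref{r:2.5} that this action is the defining (standard) representation. The paper's proof is a two-sentence sketch of exactly this, so your version is simply a more detailed unpacking of the same idea, including the explicit case analysis for the balanced index $a=(q+1)/2$ that the paper leaves implicit.
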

\begin{proof} 
The statement follows from the observation that the action of the group $G$ on the limit of kernel bundles is the standard one. The latter can be verified for example by inspecting the list of relative symmetry groups given in Remark~\ref{r:2.5}.
\end{proof}

Let $\tau$ be the set that contains only the singularity type of the regular map germ of dimension $q$; and $\eta$ the set that consists of $\tau$ and the singularity type of the Morse map germ of index $0$. It is known that the canonical kernel bundle $K_1$ over $S(\eta)\setminus S(\tau)$ is isomorphic to the normal bundle of $S(\eta)\setminus S(\tau)$ in $S(\eta)$; e.g. see \cite{Bo}. Hence, by Lemmas~\ref{l:0} and \ref{l:2.4}, the normal bundle of $S(\eta)\setminus S(\tau)$ in $S(\eta)$ is a model for the universal vector bundle $E_{q+1}\to \BSO_{q+1}$. Explicitly, the isomorphism can be constructed as follows. A point in $S(\eta)\setminus S(\tau)$ is represented by a fold map germ $f\colon (\R^{t+q}, 0)\to (\xi_t|b, 0)$, while the fiber of the normal bundle of $S(\eta)\setminus S(\tau)$ in $S(\eta)$ over $f$ is given by the space of map germs
\[
   f+v^*\colon (\R^{t+q}, 0)\longrightarrow (\xi_t|b, 0)
\]
where $v^*$ is the composition of the orthogonal projection $\R^{t+q}\to L$ onto $L=\mathop\mathrm{ker}df|0$ and a linear function on $L$ with values in $\R\simeq \mathop\mathrm{coker}df|0$.  By using the standard Riemannian metric we may identify $v^*$ with the dual vector $v\in L$. Then in terms of representatives the correspondence $(f, v^*)\mapsto (L, v)$ defines a map to $E_{q+1}$ from a neighborhood of $S(\eta)\setminus S(\tau)$ in $S(\eta)$ of pairs $(f, v^*)$ with $||v^*||\ll \min ||f(u)||$ where $u$ ranges over vectors in $\R^{t+q}$ orthogonal to $L$.

\begin{lemma}\label{l:3.6} Let $\tau$ be the set that contains only the singularity type of the regular map germ; and $\eta$ the set that consists of $\tau$ and the singularity type of the Morse map germ of index $0$. Then the inclusion $\tau\subset \eta$ induces a homotopy equivalence $S(\eta)\setminus S(\tau)\subset S(\eta)$. 
\end{lemma}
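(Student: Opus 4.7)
The plan is to exhibit $S(\eta)\setminus S(\tau)$ as a deformation retract of $S(\eta)$. The space $S(\eta)$ is stratified by the two singularity types: the open stratum $S(\tau)$ of submersion germs, homotopy equivalent to $\BSO_q$ by Lemma~\ref{lemma:3.1}, and the closed stratum $S(\eta)\setminus S(\tau)$ of fold index $0$ germs, which by Lemma~\ref{l:0} together with Remark~\ref{r:2.5} (applied with $a=0$, $b=q+1$) is homotopy equivalent to $\BSO_{q+1}$. The natural $S^q$-bundle $\BSO_q\to\BSO_{q+1}$ coming from $\SO_{q+1}/\SO_q=S^q$ is the shadow of this stratification, and the collapse it induces should provide the retraction.

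First I would use the description in the paragraph preceding the lemma, which identifies the normal bundle of $S(\eta)\setminus S(\tau)$ in $S(\eta)$ with the universal $(q+1)$-plane bundle $\xi_{q+1}\colon E_{q+1}\to\BSO_{q+1}$, via the correspondence $(f,v^*)\mapsto f+v^*$. For $v^*=0$ we recover the fold germ $f\in S(\eta)\setminus S(\tau)$, whereas for $v^*\neq 0$ the differential $d(f+v^*)|_0=df|_0+v^*$ is surjective, so $f+v^*$ lies in $S(\tau)$. Consequently a tubular neighborhood $U\subset S(\eta)$ of $S(\eta)\setminus S(\tau)$ is identified with a neighborhood of the zero section in $E_{q+1}$; scaling $v^*\mapsto sv^*$ for $s\in[0,1]$ yields a deformation retraction of $U$ onto $S(\eta)\setminus S(\tau)$, and of $U\cap S(\tau)=U\setminus(S(\eta)\setminus S(\tau))$ onto the unit sphere bundle of $\xi_{q+1}$, whose total space is $\BSO_q$.

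The hard part is to verify that the inclusion $\iota\colon U\cap S(\tau)\hookrightarrow S(\tau)$ is itself a homotopy equivalence. I would compute the canonical kernel bundle over $U\cap S(\tau)$ directly: at a point $f+v^*$ with $v^*\neq 0$, the kernel of $d(f+v^*)|_0$ equals $\{x\in L : v^*(x)=0\}$, where $L=\ker df|_0$ is the fiber of $\xi_{q+1}$ and $v^*$ is viewed as the linear functional on $L$ dual to $v\in L$. This is the orthogonal complement $v^{\perp}\subset L$, so the canonical kernel bundle over $U\cap S(\tau)\simeq\BSO_q$ is the universal $q$-plane bundle. By Lemma~\ref{l:2.4} the canonical kernel bundle over $S(\tau)$ is also the universal $q$-plane bundle, and $\iota$ pulls the latter back to the former; since the classifying maps $U\cap S(\tau)\to\BSO_q$ and $S(\tau)\to\BSO_q$ are both homotopy equivalences and the triangle they form with $\iota$ commutes up to homotopy, $\iota$ is a homotopy equivalence.

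Finally, $S(\eta)=U\cup S(\tau)$ is an open cover with $U\cap S(\tau)\hookrightarrow S(\tau)$ a homotopy equivalence, so the standard gluing (homotopy pushout) lemma for open covers yields that $U\hookrightarrow S(\eta)$ is a homotopy equivalence; composing with the deformation retraction $U\simeq S(\eta)\setminus S(\tau)$ identifies the original inclusion $S(\eta)\setminus S(\tau)\hookrightarrow S(\eta)$ as a homotopy equivalence. The main obstacle is the kernel-bundle computation in the third paragraph, which is what guarantees that the punctured tubular neighborhood of the fold stratum exhausts the regular stratum homotopically, rather than sitting inside it as a proper subspace.
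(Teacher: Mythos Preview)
Your proof is correct and follows essentially the same route as the paper's. Both arguments decompose $S(\eta)$ as the union of $S(\tau)$ with a tubular neighborhood of the fold stratum identified with $E_{q+1}$, reduce the problem to showing that the inclusion of the punctured neighborhood $E_{q+1}\setminus\mathbf{0}\simeq U\cap S(\tau)$ into $S(\tau)$ is a homotopy equivalence, and verify this by the identical kernel-bundle computation (the kernel at $f+v^*$ is $v^{\perp}\subset L$, so the canonical kernel bundle restricts to the universal $q$-plane bundle on both sides). Your presentation makes the final gluing/pushout step explicit, whereas the paper leaves it implicit after establishing that the map~(\ref{eq:3.5}) is a homotopy equivalence, but the mathematical content is the same.
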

\begin{proof} Let $\mathbf{0}\subset E_{q+1}$ denote the image of the zero section in the total space $E_{q+1}$ of the universal vector bundle over $\BSO_{q+1}$. Then there is an equivalence
\[
      S(\eta)= S(\tau)\sqcup E_{q+1}/\sim
\]
where $\sim$ identifies the two copies of $E_{q+1}\setminus \mathbf{0}$ in $S(\tau)$ and $E_{q+1}$. By Lemma~\ref{l:0} the set $S(\tau)$ is homotopy equivalent to the classifying space $\BSO_q$. On the other hand it is well-known that $E_{q+1}\setminus \mathbf{0}$ is a model for the classifying space $\BSO_q$. Thus it remains to show that the inclusion 
\begin{equation}\label{eq:3.5}
 E_{q+1}\setminus \mathbf{0}\longrightarrow S(\tau)
\end{equation}
is a homotopy equivalence. To this end, let us identify the universal vector bundles over the two copies of $\BSO_q$ in the map (\ref{eq:3.5}). We have seen that the universal vector bundle of $S(\tau)$ is isomorphic to the canonical kernel bundle over $S(\tau)$. On the other hand, the space $E_{q+1}\setminus\mathbf{0}$ consists of pairs $(L, v)$ of a subspace $L\subset \R^{\infty}$ of dimension $q+1$ and a non-zero vector in $L$. 
The universal vector bundle over $E_{q+1}\setminus\mathbf{0}$ is the vector bundle whose fiber over $(L,v)$ consists of vectors in $L$ orthogonal to $v$. Consequently the universal vector bundle over $E_{q+1}\setminus \mathbf{0}$ is canonically isomorphic to the restriction of the universal vector bundle over $S(\tau)$ to $E_{q+1}\setminus \mathbf{0}$ and therefore the inclusion (\ref{eq:3.5}) is a homotopy equivalence.   

\end{proof}

\section{Retraction theorem}

We recall that the spectrum $\A_1$ has the property that for every manifold $N$ the set $[N, \A_1]$ classifies proper oriented fold maps $M\to N$ of dimension $q$ up to cobordism. For each $t\ge 0$, let $A_{1,t}$ denote the space $S_t(\tau)$ where $\tau$ is the set of singularity types of regular and fold map germs. Then the $(t+q)$-th space of the spectrum $\A_1$ is the Thom space of the vector bundle $\pi_t^*\xi_t$ over $A_{1,t}$. Let $X_t$ denote the subspace of $A_{1,t}$ of fold map germs of index $0$. Then the Thom spaces of vector bundles $\pi_t^*\xi_t|X_t$ form a subspectrum of $\A_1$ denoted by $\mathbf{X}$. We will denote $\lim A_{1,t}$ and $\lim X_t$ by $A_1$ and $X$ respectively.

Our main result is based on the following somewhat surprising theorem whose proof occupies the rest of the section.

\begin{theorem}\label{l:1} Suppose that $q$ is even, then there is a retraction $r$ of $\A_1$ onto its subspectrum $\X$. 
\end{theorem}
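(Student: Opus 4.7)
The plan is to construct the retraction by first defining a map of base spaces $f\colon A_{1,t}\to X_t$ that restricts to the identity on $X_t$ and is covered by an isomorphism of stable vector bundles, then passing to Thom spectra.

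First, I analyze the stratified structure of $A_{1,t}$: there is the open regular stratum $A_{0,t}\simeq \BSO_q$ and closed fold strata $X_{i,t}$ for $i=0,1,\ldots,q+1$. By Lemma~\ref{l:0} together with Lemma~\ref{l:2.4} and Remark~\ref{r:2.5}, each $X_{i,t}$ is homotopy equivalent to $B\SO(i,q+1-i)$, and its canonical kernel bundle is the associated $(q+1)$-dimensional bundle $\xi_i\oplus\xi_{q+1-i}$. Since $q$ is even, $q+1$ is odd, so $i\ne q+1-i$ for all $i$ and no stratum involves the extended symmetry groups of Remark~\ref{r:2.5}. Using the exact sequence (\ref{e:1})-(\ref{e:3}), an argument analogous to Proposition~\ref{l:2} shows that $\X$ has stable vector bundle $\varepsilon\ominus\xi_{q+1}$ of dimension $-q$ over $X\simeq \BSO_{q+1}$.

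Next, I define $f$ stratum-by-stratum. On each fold stratum $X_{i,t}$, let $f$ be the map $B\SO(i,q+1-i)\to \BSO_{q+1}$ induced by the natural group inclusion $\SO(i,q+1-i)\hookrightarrow \SO_{q+1}$; this inclusion exists precisely because $q$ is even, and it classifies $\xi_i\oplus\xi_{q+1-i}$ as a pullback of $\xi_{q+1}$, matching the kernel-bundle structure. On the regular stratum $A_{0,t}\simeq \BSO_q$, let $f$ be the standard stabilization $\BSO_q\to \BSO_{q+1}$ classifying $\xi_q\oplus\varepsilon$. These definitions glue because a neighborhood of $X_{i,t}$ in $A_{1,t}$ is modeled, generalizing Lemma~\ref{l:3.6}, on the total space of $\xi_i\oplus\xi_{q+1-i}$ with $X_{i,t}$ as the zero section and the nonzero complement lying in $A_{0,t}$; the regular kernel bundle $\xi_q$ at such a point is the hyperplane orthogonal to the normal vector, so $f^*\xi_{q+1}$ agrees across strata. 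The resulting base map is covered by the corresponding stable vector bundle isomorphism, yielding a retraction of Thom spectra $r\colon\A_1\to\X$ with $r|\X=\id$ after passing to the colimit in $t$.

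The hard part is precisely the gluing in the previous paragraph: checking continuity of $f$ and of the bundle isomorphism at the transitions between strata of different indices. The parity of $q$ enters essentially here. For $q$ odd, the middle stratum $X_{(q+1)/2,t}$ has an extended symmetry group (Remark~\ref{r:2.5}) whose additional generators have determinant $-1$ in $\O_{q+1}$, so no group inclusion into $\SO_{q+1}$ exists and the naive construction of $f$ fails on that stratum. For $q$ even the group-theoretic obstruction vanishes, but showing that the local bundle isomorphisms near several strata combine into a global one still requires a careful inductive argument, stratifying $A_{1,t}$ by depth of singular locus and extending $f$ and the covering bundle map one step at a time.
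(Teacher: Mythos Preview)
Your approach coincides with the paper's: both build a $(q+1)$-bundle $\chi$ over $A_1$ (equivalently a classifying map $A_1\to\BSO_{q+1}\simeq X$) restricting to the kernel bundle on each stratum, then check that $\pi|A_1\simeq\varepsilon\ominus\chi$ so that the induced map of Thom spectra is the desired retraction. The paper packages this as Lemmas~\ref{l:14}--\ref{l:4.6}. Two corrections are in order. First, the parity of $q$ enters not through the mere existence of the inclusion $\SO(i,q+1-i)\hookrightarrow\SO_{q+1}$ (which holds for all $q$) but through the triviality of the \emph{cokernel} line bundle $C_1$ (Lemma~\ref{lemma1}): your gluing claim amounts to the isomorphism $p^*K_1\cong K_0\oplus p^*C_1$ on the punctured tubular neighborhood (this is exactly Lemma~\ref{lemma2}), and you need $C_1\cong\varepsilon$ to match the trivial $\varepsilon$ summand you chose over $A_0$. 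Second, your final paragraph overstates the remaining difficulty. For fold maps there is no ``depth of singular locus'': the fold set $Z\subset A_1$ is a disjoint union of closed submanifolds indexed by $i=0,\dots,\lfloor(q+1)/2\rfloor$ (not $0,\dots,q+1$; indices $i$ and $q+1-i$ give the same singularity type in this setting), with pairwise disjoint tubular neighborhoods, so the gluing is a single step per component as in Lemma~\ref{lemma2} and no inductive argument is required.
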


We will show that the composition of the map 
\[
\pi|A_1: A_1 \longrightarrow \{-q\}\times \BSO  
\]
with the involution $I$ on $\BSO$ is homotopic relative to $X\subset A_1$ to a map with image in $\BSO_{q+1}$. This will readily imply Theorem~\ref{l:1}.

\begin{proof}[Proof of Theorem~\ref{l:1}]


Let $Z_t$ denote the subset of $A_{1,t}$ that consists of  fold map germs. Thus $A_{1,t}=Z_t\cup A_{0,t}$. Over $Z_t$ there are canonical kernel and cokernel bundles denoted by $K_{1,t}$ and $C_{1,t}$. We note that the kernel bundle $K_{1,t}$ is isomorphic to the normal bundle of $Z_t$ in $A_{1,t}$ (e.g., see \cite{Bo}). We denote the canonical kernel and cokernel bundles over $Z=\lim{Z_t}$ by $K_1$ and $C_1$ respectively. 

\begin{lemma}\label{l:14}
The map $\pi|Z$ coincides with the composition
\[
    Z \longrightarrow \{q\}\times \BSO \longrightarrow \{-q\}\times \BSO
\]
of a map classifying $K_1\ominus C_1$ and the involution $I$. In other words the map $\pi|Z$ is a stable vector bundle isomorphic to $-(K_1\ominus C_1)$. 
\end{lemma}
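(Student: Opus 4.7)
The plan is to follow the pattern of Proposition~\ref{l:2} very closely. By Proposition~\ref{r:3.1}, the classifying map $\pi_t|Z_t \colon Z_t \to \{-q\}\times \BSO$ represents the stable vector bundle $\pi_t^*(\xi_t \ominus \varepsilon^{t+q})$ of dimension $-q$. The goal is therefore to identify this stable bundle with $C_{1,t}\ominus K_{1,t}$, and then observe that applying the involution $I$ (which negates a stable vector bundle) converts the classifying map of $K_1\ominus C_1$ into the classifying map of $-(K_1\ominus C_1)=C_1\ominus K_1$, matching $\pi|Z$.

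The key input is the exact sequence (\ref{e:1}) applied to the tautological fold map germ over $Z_t$. At each point $f\in Z_t$, the fiber of $\pi_t^*\varepsilon^{t+q}$ plays the role of $TM$ at the singular point, and the fiber of $\pi_t^*\xi_t$ plays the role of $f^*TN$. The differential $df|0$ varies smoothly over $Z_t$ and extends to a bundle homomorphism $\pi_t^*\varepsilon^{t+q}\to \pi_t^*\xi_t$ whose kernel is $K_{1,t}$ and whose cokernel is $C_{1,t}$. Because fold singularities have a single prescribed Morse-type normal form (on each index component of $Z_t$), the splittings in (\ref{e:3}) hold globally as bundle splittings, giving
\[
  \pi_t^*\varepsilon^{t+q}|_{Z_t}\cong K_{1,t}\oplus\coim(df),\qquad \pi_t^*\xi_t|_{Z_t}\cong C_{1,t}\oplus\im(df),
\]
together with a fiberwise isomorphism $\coim(df)\cong \im(df)$ induced by the differential. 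Combining these gives
\[
  \pi_t^*(\xi_t\ominus\varepsilon^{t+q})|_{Z_t}\cong C_{1,t}\ominus K_{1,t}
\]
as stable vector bundles.

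To conclude, I would pass to the limit $t\to\infty$: the stabilization maps $j_t$ of Section~2 are built so that kernels, cokernels, and the differential of the universal germ are preserved (the extra $\R$-factor lands in $\coim$ and $\im$ simultaneously), so the identifications assemble to an isomorphism $\pi^*(\xi\ominus\varepsilon)|_{Z}\cong C_1\ominus K_1$ on $Z=\lim Z_t$. Finally, since $I\colon \BSO\to\BSO$ converts a classifying map of any stable bundle into the classifying map of its stable negative, the map classifying $K_1\ominus C_1$ followed by $I$ classifies $C_1\ominus K_1$, which is exactly $\pi|Z$.

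The main obstacle I anticipate is bookkeeping rather than substance: verifying that the splitting of (\ref{e:1}) is truly a bundle-level splitting over $Z_t$ (uniform over the different fold indices listed in Remark~\ref{r:2.5}), and checking that the stabilization isomorphisms $\xi_{t+1}|\BSO_t\cong\xi_t\oplus\varepsilon$ used in defining $j_t$ are compatible with the kernel/coimage/image/cokernel decomposition so that the identification $C_{1,t}\ominus K_{1,t}$ is natural in $t$. Both reduce to the fact that crossing with $\id_\R$ adds a trivial summand to both $\coim$ and $\im$ and leaves $K_{1,t}$ and $C_{1,t}$ unchanged.
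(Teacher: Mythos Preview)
Your argument is correct and follows essentially the same route as the paper: both identify $\pi_t|Z_t$ with the stable bundle $(\pi_t|Z_t)^*(\xi_t\ominus\varepsilon^{t+q})$ and then use the four-term exact sequence over $Z_t$ with kernel $K_{1,t}$ and cokernel $C_{1,t}$ to conclude $(\pi_t|Z_t)^*(\xi_t\ominus\varepsilon^{t+q})\simeq C_{1,t}\ominus K_{1,t}$. The paper simply records the exact sequence and reads off the stable isomorphism directly, whereas you unpack the splittings via $\coim$ and $\im$ and add the (routine) compatibility check with stabilization; the substance is the same.
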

\begin{proof} As in the proof of Proposition~\ref{l:2}, we deduce that the map $\pi_t|Z_t$ classifies the stable vector bundle $(\pi_t|Z_t)^*(\xi_t\ominus \varepsilon^{t+q})$. On the other hand, there is an exact sequence of vector bundles 
\[
   0\longrightarrow K_{1,t}\longrightarrow (\pi_t|Z_t)^*\varepsilon^{t+q}\longrightarrow (\pi_t|Z_t)^*\xi_t\longrightarrow C_{1,t}\longrightarrow 0
\] 
over $Z_t$. Consequently, 
\[
   -(K_{1,t}\ominus C_{1,t})\simeq (\pi_t|Z_t)^*(\xi_t\ominus \varepsilon^{t+q}). 
\]
This implies the statement of the lemma.
\end{proof}

The following fact is known; we give its proof for the reader's convenience. 

\begin{lemma}\label{lemma1} If $q$ is even, then the cokernel bundle $C_1$ is trivial. 
\end{lemma}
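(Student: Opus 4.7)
The plan is to observe that $C_1$ is a line bundle, since at a fold singular point $\coker(df)$ has dimension $1$. So it suffices to show $C_1$ is orientable, as an orientable real line bundle is trivial. I will obtain the orientation of $C_1$ by combining orientability of the stable ``difference'' $C_1 \ominus K_1$ with orientability of the rank-$(q+1)$ bundle $K_1$ in the case $q$ even.

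From the four-term exact sequence over $Z_t$ recalled in the proof of Lemma~\ref{l:14},
\[
0 \longrightarrow K_{1,t} \longrightarrow (\pi_t|Z_t)^* \varepsilon^{t+q} \longrightarrow (\pi_t|Z_t)^* \xi_t \longrightarrow C_{1,t} \longrightarrow 0,
\]
one reads off that stably $C_{1,t} \ominus K_{1,t} \simeq (\pi_t|Z_t)^*(\xi_t \ominus \varepsilon^{t+q})$ over $Z_t$. Since the universal bundle $\xi_t$ lives over $\BSO_t$, this pullback is oriented as a stable vector bundle. Passing to the colimit in $t$, the stable vector bundle $C_1 \ominus K_1$ over $Z$ is oriented.

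To orient $K_1$ itself, I would invoke Lemma~\ref{l:2.4}: on each connected component of $Z$, the bundle $K_1$ is the universal vector $G$-bundle for $G$ the relative symmetry group of the corresponding fold type. Because $q$ is even, $q+1$ is odd, so the middle-index case $a=(q+1)/2$ in Remark~\ref{r:2.5} cannot occur; hence on every component $G = \SO(a,b)$ with $a+b = q+1$ and $a \neq b$. The defining condition $\det(g_1)\det(g_2) > 0$ on elements $(g_1,g_2) \in \O_a \times \O_b$ is exactly the condition that the induced action of $G$ on $\R^{q+1} = \R^a \oplus \R^b$ has positive determinant, so $G \subset \SO_{q+1}$ and $K_1$ acquires a canonical orientation.

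Combining these, $C_1$ is oriented as a stable bundle and hence, being of rank $1$, as a line bundle; so $C_1$ is trivial. The only real content lies in the case analysis of relative symmetry groups above; once one observes that $q+1$ odd forces $a \neq b$, the rest is formal manipulation of orientations in the short exact sequence and presents no real obstacle.
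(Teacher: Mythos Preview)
Your argument is correct. The exact sequence gives $w_1(C_1)=w_1(K_1)$ since the stable bundle $(\pi_t|Z_t)^*(\xi_t\ominus\varepsilon^{t+q})$ has vanishing $w_1$, and your use of Remark~\ref{r:2.5} and Lemma~\ref{l:2.4} to conclude $w_1(K_1)=0$ when $q+1$ is odd (so the middle-index case is absent and $G=\SO(a,b)\subset\SO_{q+1}$) is valid.

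The paper, however, argues more intrinsically. At a fold point the Hessian of $f$ restricted to the kernel defines a non-degenerate symmetric bilinear map $K_1|x\otimes K_1|x\to C_1|x$; since $\dim K_1=q+1$ is odd, the induced quadratic form on $K_1|x$ cannot have equal numbers of positive and negative eigenvalues, so exactly one of the two orientations of the line $C_1|x$ makes the positive-definite part dominate. This orients $C_1$ directly, without invoking the classification of relative symmetry groups. The paper's route is thus more self-contained and geometric, producing a canonical trivialization from the singularity structure itself; your route is more bundle-theoretic and systematic, but it imports Remark~\ref{r:2.5} and Lemma~\ref{l:2.4} as black boxes. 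Both ultimately hinge on the same parity observation that $q+1$ is odd.
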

\begin{proof} Let $x$ be a point in $Z$. It is represented by a map germ $f$. The second derivatives of $f|\ker(f)$ define a non-degenerate bilinear map of vector spaces
\begin{equation}\label{eq:form}
   K_1|x \otimes K_1|x\longrightarrow C_1|x
\end{equation}
over $x$; see \cite{Bo}. Thus we obtain a map $K_1\otimes K_1\to C_1$ of vector bundles. Since $K_1$ is of odd dimension, the line bundle $C_1$ is forced to be trivial. Indeed, in this case $C_1|x$ has a well-defined orientation with respect to which the quadratic form of (\ref{eq:form}) is definite positive over a subspace of $K_1$ of dimension greater than $q/2$.
\end{proof}

By Lemma~\ref{lemma1}, for even $q$, the canonical cokernel bundle $C_1$ is trivial. Thus, by Proposition~\ref{l:2} and \ref{l:14}, the maps $\pi|A_0$ and $\pi|Z$ are stable vector bundles isomorphic to $-K_0$ and $-(K_1\ominus \varepsilon)$ respectively. 

\begin{lemma}\label{lemma2} There is a vector bundle $\chi$ over $A_{1}$ of dimension $q+1$ such that $\chi|A_{0}$ is isomorphic to $K_{0}\oplus\varepsilon$, while $(\chi|Z)$ is isomorphic to $K_{1}$.
\end{lemma}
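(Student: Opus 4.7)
The plan is to realize $\chi$ over $A_{1,t}$ as the kernel of a suitable surjective bundle map $\psi\colon \varepsilon^{t+q+1}\to \pi_t^*\xi_t$ obtained by augmenting the differential $df\colon \varepsilon^{t+q}\to \pi_t^*\xi_t$. The extra coordinate will be used to surject onto the missing cokernel direction along the fold stratum $Z_t$. Since $q$ is even, Lemma~\ref{lemma1} guarantees that the canonical cokernel line bundle $C_1$ is trivial over $Z_t$. I would first pick a nowhere-vanishing section $s$ of $C_1$ and lift it to a section $B_Z$ of $\pi_t^*\xi_t|Z_t$ that is transverse to $\im(df)$ at every point; such a lift exists because the short exact sequence
\[
0\longrightarrow \im(df)\longrightarrow \pi_t^*\xi_t|Z_t\longrightarrow C_{1,t}\longrightarrow 0
\]
of vector bundles splits over the paracompact base $Z_t$. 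Then I would extend $B_Z$ to a section $B$ of $\pi_t^*\xi_t$ over $A_{1,t}$ by a partition of unity, pick a smooth cutoff $\rho\colon A_{1,t}\to [0,1]$ with $\rho\equiv 1$ on $Z_t$, and set
\[
\psi(v,w):=df(v)+\rho\,w\,B,\qquad (v,w)\in \varepsilon^{t+q}\oplus\varepsilon.
\]

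Over $Z_t$, the image of $\psi$ equals $\im(df)+\R B=\pi_t^*\xi_t|Z_t$ by transversality of $B$; over $A_{0,t}$ the summand $df$ is already surjective. Hence $\psi$ is surjective everywhere, and $\chi_t:=\ker(\psi)$ is an oriented rank-$(q+1)$ bundle. The restrictions are then immediate: over $Z_t$, since $\R B\cap \im(df)=0$, any $(v,w)\in\ker\psi$ satisfies $w=0$ and $df(v)=0$, so $\chi_t|Z_t=K_{1,t}$. Over $A_{0,t}$, projection onto the last coordinate gives a surjection $\chi_t|A_{0,t}\to\varepsilon$ (for every $w$ we can solve $df(v)=-\rho w B$ by surjectivity of $df|A_{0,t}$), with kernel $K_{0,t}$; the resulting short exact sequence
\[
0\longrightarrow K_{0,t}\longrightarrow \chi_t|A_{0,t}\longrightarrow \varepsilon\longrightarrow 0
\]
splits, giving $\chi_t|A_{0,t}\cong K_{0,t}\oplus \varepsilon$.

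The main technical point is to arrange the auxiliary data compatibly with the stabilization inclusions $A_{1,t}\hookrightarrow A_{1,t+1}$ so that the $\chi_t$ assemble into a single $(q+1)$-bundle $\chi$ on the colimit $A_1$. I would do this inductively: given $(B_t,\rho_t)$, extend to $(B_{t+1},\rho_{t+1})$ that agrees with $(B_t\oplus 0,\rho_t)$ on $A_{1,t}$ up to a small perturbation keeping $B_{t+1}$ transverse to the image of the stabilized differential on $Z_{t+1}$. Because the space of transverse lifts of a trivialization of $C_1$ is a nonempty affine bundle and the space of admissible cutoffs is convex, any two choices differ by a bundle homotopy, so the construction is well-defined up to canonical isomorphism and produces the desired $\chi$ over $A_1$.
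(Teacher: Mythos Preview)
Your proof is correct and takes a genuinely different route from the paper. The paper proceeds by local gluing: it identifies a tubular neighborhood $U$ of each index-$i$ component $Z'\subset Z$ with the Borel construction $EG_i\times_{G_i}\R^{q+1}$ via Lemmas~\ref{l:0} and~\ref{l:2.4}, sets $\chi:=p^*K_1$ on $U$, and then uses the local Morse model to exhibit a canonical isomorphism $p^*K_1\cong K_0\oplus p^*C_1$ over $U\setminus Z$; triviality of $C_1$ (Lemma~\ref{lemma1}) lets this be matched with $K_0\oplus\varepsilon$ on $A_0$. You instead build $\chi$ globally as the kernel of a surjection $\varepsilon^{t+q+1}\to \pi_t^*\xi_t$ obtained by adding one coordinate to the tautological differential so as to hit a chosen trivialization of $C_1$ along $Z$. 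Your construction is more direct, avoids the symmetry-group machinery, and has the bonus that the short exact sequence $0\to\chi_t\to\varepsilon^{t+q+1}\to\pi_t^*\xi_t\to 0$ immediately gives the stable triviality $\chi_t\oplus\pi_t^*\xi_t\cong\varepsilon^{t+q+1}$ that the paper establishes separately in Lemma~\ref{l:3.5}. The paper's approach, by contrast, makes the geometry near the fold locus more transparent and ties in cleanly with the relative-symmetry-group description used elsewhere. A minor remark: your cutoff $\rho$ is harmless but unnecessary, since $df$ is already surjective on $A_{0,t}$; taking $\rho\equiv 1$ would simplify the stabilization step.
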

\begin{proof}
We will piece the vector bundles $K_{0}\oplus \varepsilon$ over $A_{0}$ and $K_{1}$ over $Z$ together to form a vector bundle $\chi$ over $A_{1}$ of dimension $q+1$. To this end we construct a vector bundle $\chi$ over a neighborhood $U\subset A_{1}$ of $Z$ so that $\chi$ is isomorphic to $K_{1}$ over $Z$ and $K_{0}\oplus \varepsilon$ over $U\setminus Z $. In fact we show that for a projection $p$ of $U$ onto $Z$, the vector bundle $\chi:=p^*K_{1}$ over $U$ satisfies the mentioned conditions. Every choice of an isomorphism of $\chi$ and $K_{0}\oplus\varepsilon$ over $U\setminus Z$ determines an extension of $\chi$ over $A_{1}$ satisfying the requirement of Lemma~\ref{lemma2}. Below we use an obvious isomorphism of $\chi $ and $K_{0}\oplus\varepsilon$ over $U \setminus Z$.

Let $Z'$ be the component of $Z$ of fold map germs of some index $i$. By Lemma~\ref{l:0},  the space $Z'$ is weakly homotopy equivalent to the classifying space $BG$ of the relative symmetry group of fold singularity of index $i$. Furthermore, by Lemma~\ref{l:2.4} and the fact that the normal bundle of $Z'$ in $U$ is isomorphic to $K_{1,t}$, the normal bundle of $Z'$ in $U$ is isomorphic to the Borel construction 
\[
      EG\times_{G} \R^{q+1} \longrightarrow BG,
\]
where $EG$ is the total space of the universal principle $G$-bundle over $BG$ and the action of $G$ on $\R^{q+1}$ is given by its right representation. In particular, for an appropriate choice of $BG$, we obtain a bijective map $\varphi$ of $EG\times_G\R^{q+1}$ onto an open tubular neighborhood $U_{i}$ of $Z'$. 

Let $f: \R^{q+1}\to \R$ be the standard Morse function of index $i$. In particular $0\in \R^{q+1}$ is the only singular point of $f$. Let $\tilde{p}$ be the projection $\R^{q+1}\to 0$. Then there is an isomorphism $T\R^{q+1}=\tilde{p}^*\mathop{\mathrm{ker}}(df|0)$. Over $\R^{q+1}\setminus \{0\}$ there is an isomorphism of vector bundles
\[
    \mathop\mathrm{ker}(df)\oplus \mathop{\mathrm{im}}(df)\approx T\R^{q+1}=\tilde{p}^*\mathop{\mathrm{ker}}(df|0),
\]
which, by the Borel construction, translates into an isomorphism 
\[
    EG\times_{G}\mathop\mathrm{ker}(df)\ \oplus\  EG\times_{G}\mathop{\mathrm{im}}(df)\ \approx\ EG\times_{G}T\R^{q+1}=EG\times_{G}\tilde{p}^*\mathop{\mathrm{ker}}(df|0),
\]
of vector bundles over the complement in $EG\times_{G}\R^{q+1}$ to the zero section.
In view of the map $\varphi$, this yields 
\[
    K_{0}\oplus p^*C_{1} \approx \chi = p^*K_{1} \quad \mathrm{over}\quad U_{i}\setminus Z'.
\]
In the case where $q$ is even this implies the desired requirement as $C_{1}$ is trivial. 
\end{proof}

Let $\xi_t$ denote the restriction $\xi|A_{1,t}$.

\begin{lemma}\label{l:3.5} The map $\pi|A_{1}$ is a stable vector bundle of dimension $-q$ isomorphic to $-\chi\oplus\varepsilon$.
\end{lemma}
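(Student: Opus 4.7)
The plan is to show that $\chi\oplus\pi_t^*\xi_t$ is stably trivial of rank $t+q+1$ over $A_{1,t}$ for sufficiently large $t$. Combined with the observation (which I borrow from the proof of Proposition~\ref{l:2}) that $\pi_t|A_{1,t}$ classifies the stable vector bundle $(\pi_t|A_{1,t})^*(\xi_t\ominus\varepsilon^{t+q})$, this would directly yield $\pi_t^*\xi_t\ominus\varepsilon^{t+q}\simeq\varepsilon\ominus\chi$, which is precisely the desired statement $\pi|A_1\simeq -\chi\oplus\varepsilon$.

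First I would verify the stable triviality of $\chi\oplus\pi_t^*\xi_t$ stratum by stratum, using the decomposition $A_{1,t}=A_{0,t}\cup Z_t$. On $A_{0,t}$, the short exact sequence $0\to K_{0,t}\to\varepsilon^{t+q}\to\pi_t^*\xi_t\to 0$ used in the proof of Proposition~\ref{l:2} gives $\pi_t^*\xi_t\oplus K_{0,t}\simeq\varepsilon^{t+q}$, and the isomorphism $\chi|A_{0,t}\simeq K_{0,t}\oplus\varepsilon$ from Lemma~\ref{lemma2} then yields $\chi\oplus\pi_t^*\xi_t\simeq\varepsilon^{t+q+1}$. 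On $Z_t$, the four-term exact sequence from the proof of Lemma~\ref{l:14} gives $K_{1,t}\oplus\pi_t^*\xi_t\simeq\varepsilon^{t+q}\oplus C_{1,t}$; combining this with $\chi|Z_t\simeq K_{1,t}$ (Lemma~\ref{lemma2}) and the triviality of $C_{1,t}$ for even $q$ (Lemma~\ref{lemma1}) again yields $\chi\oplus\pi_t^*\xi_t\simeq\varepsilon^{t+q+1}$.

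To promote these stratified isomorphisms to a genuinely global one, I would exhibit a surjective bundle map $\widetilde{df}\colon\varepsilon^{t+q+1}\to\pi_t^*\xi_t$ over all of $A_{1,t}$ whose kernel is $\chi$: take the natural bundle map $df|0\colon\varepsilon^{t+q}\to\pi_t^*\xi_t$ induced fiberwise by differentials, and augment it by an additional section that is zero on $A_{0,t}$ and, on a tubular neighborhood of $Z_t$, is the pullback of a nowhere-zero section of $C_{1,t}\subset\pi_t^*\xi_t|Z_t$. The short exact sequence $0\to\chi\to\varepsilon^{t+q+1}\to\pi_t^*\xi_t\to 0$ arising from this map splits and delivers the global stable trivialization.

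The main obstacle is gluing these two prescriptions for the augmentation on the overlap between $A_{0,t}$ and a tubular neighborhood of $Z_t$. This compatibility is not automatic; however, it is already built into the construction of $\chi$ in Lemma~\ref{lemma2}, where the identification $K_{0,t}\oplus p^*C_{1,t}\approx p^*K_{1,t}$ over $U\setminus Z_t$ expresses exactly the matching of the two augmentations (modulo an automorphism of the trivial line bundle, absorbed into the choice of trivialization of $C_{1,t}$). The essential role of the hypothesis that $q$ is even is to ensure, via Lemma~\ref{lemma1}, that $C_{1,t}$ is trivial; without this, no global section extending $df|0$ across a tubular neighborhood of $Z_t$ can be chosen, and the argument breaks down.
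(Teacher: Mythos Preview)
Your strategy is the paper's: reduce to showing that $\chi_t\oplus\pi_t^*\xi_t$ is trivial of rank $t+q+1$ over $A_{1,t}$, verify this separately over $A_{0,t}$ and over $Z_t$, and then check that the two trivializations agree on the overlap $U_t\setminus Z_t$. The paper does this by writing down the canonical trivialization of $\chi_t\oplus J_t$ over $Z_t$ via $T_0\R^{q+t}\simeq\ker(df|0)\oplus\mathrm{coim}(df|0)$, pulling it back over $U_t$, and observing that on $\partial U_t$ it becomes (after using $J_t\oplus C_{1,t}\simeq\pi_t^*\xi_t$ and the triviality of $C_{1,t}$) the canonical trivialization of $K_{0,t}\oplus\pi_t^*\xi_t$ over $A_{0,t}$. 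Your packaging via a global surjection $\widetilde{df}\colon\varepsilon^{t+q+1}\to\pi_t^*\xi_t$ with kernel $\chi$ is a reformulation of the same construction.

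One imprecision: your augmenting section cannot literally be ``zero on $A_{0,t}$'' and simultaneously ``the pullback of a nowhere-zero section of $C_{1,t}$'' on a tubular neighborhood of $Z_t$, since the punctured tubular neighborhood $U_t\setminus Z_t$ lies inside $A_{0,t}$. You mean a section supported in $U_t$, nowhere zero on $Z_t$, and tapering off toward $\partial U_t$; with that correction $\widetilde{df}$ is surjective. You should also note that the identification $\ker\widetilde{df}=\chi$ over $U_t\setminus Z_t$ depends on the choice of section, so what your construction actually yields is \emph{some} rank-$(q+1)$ bundle satisfying the conclusions of Lemma~\ref{lemma2} together with $\ker\widetilde{df}\oplus\pi_t^*\xi_t\simeq\varepsilon^{t+q+1}$. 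That suffices for Theorem~\ref{l:1} (the $\chi$ of Lemma~\ref{lemma2} was not canonical to begin with), but the assertion that the kernel is \emph{the} $\chi$ already constructed there would need one more line of justification.
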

\begin{proof} It suffices to show that for $t\ge 1$ we have $\pi_t^*(\xi_t\ominus \varepsilon^{t+q})\simeq -\chi_t\oplus\varepsilon$, or equivalently, that the stable vector bundle $\chi_t\oplus \xi_t$ is trivial over $A_{1,t}$. Let $J_t$ denote the canonical image bundle over $Z_t$. Then there is a canonical trivialization of the vector bundle $\chi_t\oplus J_t$ which over a map germ $f\in Z_t$ is given by the isomorphism 
\[
       T_0\R^{q+t}\stackrel{\approx}\longrightarrow \mathop{\mathrm{ker}}(df|T_0\R^{q+t})\oplus \mathop{\mathrm{coim}}(df|T_0\R^{q+t})\stackrel{\approx}\longrightarrow \chi_t\oplus J_t|f.  
\]  
Let $p_t$ denote the projection of the open neighborhood $U_t$ of $Z_t$ in $A_{1,t}$ onto $Z_t$. It pulls back the above trivialization to a trivialization of $p_t^*(\chi_t\oplus J_t)$ over $\partial U_t$. In other words over each point $f$ in $\partial U_t$ we obtain a trivialization 
\[
    T_0\R^{q+t}\stackrel{\approx}\longrightarrow \chi_t\oplus J_t|f \stackrel{\approx}\longrightarrow [K_{0,t}\oplus p_t^*C_{1,t}]\oplus p^*J_t|f \stackrel{\approx}\longrightarrow K_{0,t}\oplus \xi_t|f,
\]
since $J_t\oplus C_{1,t}\approx \xi_t$. 
This trivialization coincides with the canonical trivialization of $K_{0,1}\oplus \xi_t$ over $A_{0,t}$, and therefore, it extends to a trivialization of $\chi_t\oplus \xi_t$ over $A_{1,t}$. 
\end{proof}

Since the relative symmetry group of the fold map germ of index $0$ is isomorphic to the group $\SO_{q+1}$, by Lemma~\ref{l:0} the cohomology theory $\mathbf{X}$ corresponds to a stable vector bundle over $X\simeq \BSO_{q+1}$. 

\begin{lemma}\label{l:4.6} The map $\pi|X$ is given by the composition 
\[
    {\BSO}_{q+1}\longrightarrow \{q\}\times \BSO\longrightarrow \{-q\}\times\BSO
\]
of the inclusion classified by $\chi\ominus \varepsilon$ and the involution $I$. 
\end{lemma}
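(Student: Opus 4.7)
The plan is to deduce this lemma by restricting Lemma~\ref{l:3.5} to the subspace $X\subset A_{1}$ and then identifying the bundle $\chi|X$ explicitly.

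First I would pin down the two ingredients that make the statement meaningful. For the identification $X\simeq \BSO_{q+1}$, note that a fold map germ of index $0$ has relative symmetry group described in Remark~\ref{r:2.5} with $a=0$, $b=q+1$, hence $a\ne b$, so $G=\SO(0,q+1)=\SO_{q+1}$. By Lemma~\ref{l:0} we obtain $X\simeq BG=\BSO_{q+1}$. For the bundle $\chi|X$, Lemma~\ref{lemma2} gives $\chi|Z\simeq K_{1}$, and restricting to the component $X\subset Z$ of index $0$ fold map germs yields $\chi|X\simeq K_{1}|X$. By Lemma~\ref{l:2.4}, $K_{1}|X$ is the universal vector $G$-bundle over $BG$, i.e., the tautological $(q+1)$-plane bundle over $\BSO_{q+1}$. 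Thus the classifying map of $\chi|X$ is precisely the standard inclusion $\BSO_{q+1}\hookrightarrow \{q+1\}\times\BSO$, and $\chi|X\ominus\varepsilon$ is classified by the composition $\BSO_{q+1}\to \{q\}\times\BSO$ obtained by stable subtraction of a trivial line bundle.

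Next I would invoke Lemma~\ref{l:3.5}, which states that $\pi|A_{1}$ is a stable vector bundle of dimension $-q$ isomorphic to $-\chi\oplus\varepsilon$. Restricting to the subspace $X\subset A_{1}$ gives
\[
   \pi|X \;\simeq\; -\chi|X\,\oplus\,\varepsilon \;=\; -(\chi|X\ominus\varepsilon)
\]
as stable vector bundles over $\BSO_{q+1}$ of dimension $-q$.

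Finally I would read off the factorization through the involution. By the discussion preceding Proposition~\ref{l:2}, the involution $I$ on $\BSO$ sends the class of a stable vector bundle to the class of its negative. Therefore the map classifying $-(\chi|X\ominus\varepsilon)\colon \BSO_{q+1}\to \{-q\}\times\BSO$ is precisely the composition of the map classifying $\chi|X\ominus\varepsilon\colon \BSO_{q+1}\to \{q\}\times\BSO$ with $I$, which is the conclusion of the lemma. The whole argument is essentially bookkeeping; the only subtle point is verifying that the identification $\chi|X\simeq K_{1}|X$ from Lemma~\ref{lemma2} indeed pulls back to the universal $\SO_{q+1}$-bundle under the Kazarian--Sz\H{u}cs homotopy equivalence of Lemma~\ref{l:0}, but this is guaranteed by Lemma~\ref{l:2.4} applied to the fold singularity of index $0$, so no additional work is needed.
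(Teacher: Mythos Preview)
Your proof is correct and follows essentially the same line as the paper's: identify $\chi|X$ with $K_1|X$, recognize this as the universal bundle over $\BSO_{q+1}$ via Lemma~\ref{l:2.4}, and then read off $\pi|X$ as the negative of $\chi|X\ominus\varepsilon$. The only cosmetic difference is that you obtain $\pi|X\simeq -(\chi|X\ominus\varepsilon)$ by restricting Lemma~\ref{l:3.5} (computed over all of $A_1$), whereas the paper restricts the earlier Lemma~\ref{l:14} (computed over $Z$) to $X\subset Z$ and uses the triviality of $C_1$; since Lemma~\ref{l:3.5} is itself built from Lemma~\ref{l:14}, the two arguments are interchangeable.
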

\begin{proof} The restriction of the vector bundle $\chi$ to $X$ is the universal vector bundle over $\BSO_{q+1}$. On the other hand $\chi|X=K_1$. Consequently the statement of the lemma follows from Lemma~\ref{l:14}. 
\end{proof}

Now we are in position to complete the proof of Theorem~\ref{l:1}. By Lemma~\ref{l:3.5}, the composition 
\begin{equation}\label{eqn:1}
     I\circ \pi|A_1\colon A_1\longrightarrow \{-q\}\times \BSO\longrightarrow \{q\}\times \BSO
\end{equation}
is a stable vector bundle of dimension $q$ isomorphic to $\chi\ominus\varepsilon$. Since $\chi$ is a (non-stable) vector bundle of dimension $q+1$, the composition (\ref{eqn:1}) factors through a composition
\[
   A_1\stackrel{r}\longrightarrow {\BSO}_{q+1}\longrightarrow \{q\}\times \BSO
\]
where the latter map is the canonical inclusion classifying $\xi_{q+1}\ominus\varepsilon$. We note that by Lemma~\ref{l:4.6} the restriction of the former map $r$ to $\lim X_t$ is the identity map. Thus, the map $r$ is a desired retraction. 

This completes the proof of Theorem~\ref{l:1}.
\end{proof}

\section{The splitting theorem}

For $i=0,...,\left\lfloor \frac{q+1}{2}\right\rfloor$, let $\B_i$ denote the spectrum $\Sigma_{\tau}/\A_0$ where $\tau$ stands for the set of the fold singularity of index $i$ and the singularity of the regular map germ. Then the spectrum $\A_1/\mathbf{A}_0$ is a wedge sum of spectra $\vee\mathbf{B}_i$ indexed by $i=0,\dots, \left\lfloor \frac{q+1}{2}\right\rfloor$. 

Let us recall that the spectrum $\Sigma_{\tau}$, where $\tau$ stands for the set of the fold singularity of index $0$ and the singularity of the regular map germ, is equivalent to the spectrum $\mathbf{X}$ (see Lemma~\ref{l:3.6}). Hence, Theorem~\ref{l:1} implies that in the case where $q$ is even the cofiber sequence of spectra
\begin{equation}\label{l:3}
    \X \stackrel{j}\longrightarrow \A_1 \longrightarrow \vee_{i>0}\B_i
\end{equation} 
splits, i.e., the following theorem takes place.

\begin{theorem}\label{th:1}
If the dimension $q$ of maps is even, then the spectrum $\A_1$ is equivalent to the spectrum $\vee_{i>0}\B_i\vee \X$.
\end{theorem}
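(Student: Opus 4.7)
The plan is to realize Theorem~\ref{th:1} as a purely formal consequence of Theorem~\ref{l:1}, using the standard fact that a cofiber sequence in the stable homotopy category which admits a retraction splits as a wedge sum. Almost all of the geometric work has already been done in the retraction theorem; what remains is bookkeeping about the cofiber sequence (\ref{l:3}) and a soft argument about splittings of triangles.

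First I would verify the wedge decomposition $\A_1/\A_0 \simeq \bigvee_{i\ge 0}\B_i$ that is used implicitly in the statement of (\ref{l:3}). The subspace $Z_t = A_{1,t}\setminus A_{0,t}$ is the disjoint union, over $i=0,\dots,\lfloor(q+1)/2\rfloor$, of the open strata $Z_t^{(i)}$ of fold germs of index $i$ (each a component of $Z_t$, used already in the proof of Lemma~\ref{lemma2}). The cofiber $\A_1/\A_0$ is the Thom spectrum of the canonical bundle over this disjoint union, which splits as the wedge of Thom spectra over the individual strata; this identifies the wedge summands with the spectra $\B_i = \Sigma_\tau/\A_0$, $\tau = \{\text{regular, fold of index } i\}$. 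Since Lemma~\ref{l:3.6} identifies $\X$ with the $\Sigma_\tau$ for $\tau$ the set of regular and index-$0$ fold germs, collapsing the $\A_0$ summand on both sides gives a cofiber sequence $\X \xrightarrow{j} \A_1 \to \bigvee_{i>0}\B_i$, namely (\ref{l:3}).

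Next I would plug in Theorem~\ref{l:1}, which provides a map $r\colon \A_1\to \X$ of spectra with $r\circ j \simeq \id_{\X}$ (the construction of $r$ uses the bundle $\chi$ of Lemma~\ref{lemma2} to factor $I\circ \pi|A_1$ through $\BSO_{q+1}\simeq X$, and Lemma~\ref{l:4.6} ensures the restriction to $\X$ is the identity). Finally I would invoke the general lemma in the stable homotopy category: if $A \xrightarrow{j} B \xrightarrow{\pi} C$ is a cofiber sequence and $r\colon B \to A$ satisfies $r\circ j \simeq \id_A$, then the map $(r,\pi)\colon B\to A\vee C$ is a weak equivalence. This follows either by the octahedral axiom or, more concretely, by noting that $(r,\pi)$ induces an isomorphism on stable homotopy groups via the five-lemma applied to the long exact sequences of the two cofiber sequences $A\to B\to C$ and $A\to A\vee C\to C$. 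Applying this with $A=\X$, $B=\A_1$, $C=\bigvee_{i>0}\B_i$ yields $\A_1\simeq \X\vee\bigvee_{i>0}\B_i$, completing the proof.

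I do not anticipate any real obstacle at this stage: the one nontrivial input is Theorem~\ref{l:1}, and the decomposition of $\A_1/\A_0$ into the $\B_i$ is a straightforward consequence of the stratification of $Z$ by fold index. The hardest bookkeeping is making sure that the retraction $r$ constructed in Theorem~\ref{l:1} really restricts to the identity on $\X$ rather than merely to some self-equivalence of $\X$, but this is exactly what Lemma~\ref{l:4.6} is designed to supply.
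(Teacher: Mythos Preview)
Your proposal is correct and follows essentially the same approach as the paper: set up the cofiber sequence $\X\to\A_1\to\vee_{i>0}\B_i$ using the stratification of $Z$ by fold index together with Lemma~\ref{l:3.6}, and then split it using the retraction $r$ from Theorem~\ref{l:1}. The paper's own proof is a one-line invocation of Theorem~\ref{l:1}; you have simply supplied the routine details (the wedge decomposition of $\A_1/\A_0$ and the standard fact that a retracted cofiber sequence splits) that the paper leaves implicit.
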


In the case where $q$ is odd, the proof of Theorem~\ref{l:1} only implies that 
there is a cofiber sequence of spectra
\begin{equation}\label{e:2}
    \vee_{i\ne 0,s}\B_i\vee \X \longrightarrow \A_1 \longrightarrow \B_s
\end{equation}
where $s=\frac{q+1}{2}$. Indeed, let $\A_1'$ denote the spectrum $\Sigma_{\tau}$, where $\tau$ stands for the set of fold singularities of index $\ne s$ and the singularity type of a regular germ. Then, by the proof of Theorem~\ref{l:1}, there is a retraction $\mathbf{A}'_1\to \mathbf{X}$, which implies that the spectrum $\A_1'$ is equivalent to $\vee_{i\ne 0,s}\B_i\vee \X$. Now the cofiber sequence (\ref{e:2}) follows from the observation that $\A_1/\A_1'$ is equivalent to $\B_s$. 

Let $\mathbf{Y}$ denote the subspectrum of $\A_1$ defined as $\Sigma_{\tau}$, where $\tau$ consists of fold singularities of indexes $0$ and $s$ and the singularity type of a regular map germ. We note that there is a projection of $\A_1$ onto $\vee_{i\ne 0,s}\B_i$. In view of the first map in the sequence (\ref{e:2}) we deduce the following theorem. 

\begin{theorem}\label{th:4} In the case where the dimension $q$ of maps is odd, the spectrum $\A_1$ is equivalent to $\vee_{i\ne 0,s}\B_i\vee \mathbf{Y}$.
\end{theorem}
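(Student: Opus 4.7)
The plan is to split the cofiber sequence
\[
  \Y\longrightarrow \A_1\longrightarrow \A_1/\Y
\]
and identify its right-hand term with $\vee_{i\ne 0,s}\B_i$. The identification is immediate from the opening paragraph of Section~5: since $\Y$ is the spectrum $\Sigma_{\tau}$ for $\tau$ the set consisting of the regular germ and the folds of indices $0$ and $s$, one has $\Y/\A_0\simeq \B_0\vee \B_s$, and therefore $\A_1/\Y\simeq (\A_1/\A_0)/(\Y/\A_0)\simeq \vee_{i\ne 0,s}\B_i$.

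To produce a section, I would use the equivalence $\A_1'\simeq \vee_{i\ne 0,s}\B_i\vee \X$ already recorded in the discussion preceding the theorem: the proof of Theorem~\ref{l:1} applies to $\A_1'$, because the cokernel line bundle is trivial on each fold stratum once the middle index $s$ is excluded. This equivalence provides a wedge-summand inclusion $\sigma\colon \vee_{i\ne 0,s}\B_i\to \A_1'$, and composing with $\A_1'\hookrightarrow \A_1$ yields a candidate section $s\colon \vee_{i\ne 0,s}\B_i\to \A_1$.

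To verify that $s$ is indeed a section of $\A_1\to \A_1/\Y$, I would fit it into the commutative diagram
\[
\begin{CD}
\vee_{i\ne 0,s}\B_i @>\sigma>> \A_1' @>>> \A_1'/\X @= \vee_{i\ne 0,s}\B_i \\
@| @VVV @VVV @| \\
\vee_{i\ne 0,s}\B_i @>s>> \A_1 @>>> \A_1/\Y @= \vee_{i\ne 0,s}\B_i
\end{CD}
\]
whose middle vertical map is induced by $\A_1'\hookrightarrow \A_1$ (well-defined because $\X\subset \Y$). The top composite is the identity by construction of $\sigma$, and the third vertical map is the identity on $\vee_{i\ne 0,s}\B_i$ because both $\A_1'/\X$ and $\A_1/\Y$ arise from $\A_1/\A_0=\vee_i\B_i$ by collapsing the summands lying in $\Y/\A_0$, while the inclusion $\A_1'\hookrightarrow \A_1$ restricts to the identity on each surviving summand $\B_i$ with $i\ne 0,s$. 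Hence the bottom composite is the identity, so the bottom cofiber sequence splits and delivers $\A_1\simeq \vee_{i\ne 0,s}\B_i\vee \Y$.

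The main step I expect to need care is the identification of the induced map $\A_1'/\X\to \A_1/\Y$ as the identity on $\vee_{i\ne 0,s}\B_i$. This amounts to checking that the wedge decompositions $\A_1'/\A_0\simeq \vee_{i\ne s}\B_i$ and $\A_1/\A_0\simeq \vee_i\B_i$ are compatible under $\A_1'\hookrightarrow \A_1$, which is true because each $\B_j$ is intrinsically $\Sigma_{\tau_j}/\A_0$ and does not depend on the ambient spectrum from which the quotient is formed. Once this compatibility is in place, the splitting is a formal consequence of the splitting of $\A_1'$ furnished by the proof of Theorem~\ref{l:1}.
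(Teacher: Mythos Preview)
Your argument is correct and follows essentially the same route as the paper. The paper's proof is terse: it records the projection $\A_1\to\vee_{i\ne 0,s}\B_i$ (i.e.\ $\A_1\to\A_1/\Y$) and invokes the first map of the cofiber sequence~(\ref{e:2}), which is exactly the inclusion $\A_1'\simeq \vee_{i\ne 0,s}\B_i\vee\X\hookrightarrow\A_1$ you use; you have simply made explicit the verification that the resulting composite $\vee_{i\ne 0,s}\B_i\to\A_1'\to\A_1\to\A_1/\Y$ is the identity, via the factorization $\A_1'/\X\to\A_1/\X\to\A_1/\Y$ through the wedge $\vee_{i>0}\B_i$.
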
 

Theorems~\ref{th:1} and \ref{th:4} are equivalent to Theorem~\ref{main} in the Introduction. In the rest of the section we will prove a cohomology splitting of the spectrum $\mathbf{A}_1$.  

The spectrum $\mathbf{Y}$ in Theorem~\ref{th:4} can be described in terms of cofiber sequences
\[
      \X \longrightarrow \mathbf{Y} \longrightarrow \B_s
\]
and
\[
     \A_0 \longrightarrow \mathbf{Y} \longrightarrow \B_0\vee \B_s.
\]

It is not known if the spectrum $\mathbf{Y}$ splits off the term $\mathbf{B}_s$. On the other hand, the cohomology groups of the spectra $\A_1$ split both in the cases of even and odd dimension as follows. 
 
\begin{theorem}\label{th:2} For any coefficient field, the cohomology group $H^*(\A_1)$ is isomorphic to the sum $\oplus_{i>0}H^*(\B_i)\oplus H^*(\X)$. 
\end{theorem}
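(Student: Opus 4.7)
My plan is to treat the two parities of $q$ separately and reduce each case to facts already in hand.

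For $q$ even, the claim is immediate from Theorem~\ref{th:1}, which provides a spectrum-level equivalence $\A_1\simeq\B_1\vee\cdots\vee\B_{q/2}\vee\X$; generalized cohomology of spectra with any field of coefficients turns wedges into direct sums, so the desired decomposition follows at once.

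For $q$ odd, Theorem~\ref{th:4} gives $\A_1\simeq\bigl(\bigvee_{i\ne 0,s}\B_i\bigr)\vee\mathbf{Y}$ with $s=(q+1)/2$, so the summands $\B_i$ with $i\ne 0,s$ split off automatically on cohomology, and the theorem reduces to proving
\[
H^*(\mathbf{Y})\cong H^*(\X)\oplus H^*(\B_s)
\]
for an arbitrary field of coefficients. Using the cofiber sequence $\X\to\mathbf{Y}\to\B_s$ recorded just after Theorem~\ref{th:4}, this is in turn equivalent to showing that the induced long exact sequence in cohomology degenerates into short exact sequences, i.e.\ that the connecting homomorphism $H^*(\X)\to H^{*+1}(\B_s)$ vanishes. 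My strategy would be to exhibit a cohomological section of the quotient $\mathbf{Y}\to\B_s$ using Thom-spectrum descriptions of each term: $\X$ is the Thom spectrum over $X\simeq\BSO_{q+1}$ of the stable bundle $-\chi$ from Lemma~\ref{l:4.6}, while $\B_s$ is, after adapting the local normal form argument of Lemma~\ref{l:3.6} to the index-$s$ stratum, the Thom spectrum over $BG_s$ of a stable bundle determined by $K_1\ominus C_1$, with $G_s$ the relative symmetry group of the middle-index fold singularity recorded in Remark~\ref{r:2.5}. Through the Thom isomorphism the connecting homomorphism becomes multiplication by a characteristic class, and the splitting reduces to the vanishing of this class.

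The principal difficulty will be verifying this vanishing uniformly for any field of coefficients, including $\mathbb{F}_2$, since in the odd-$q$ case the trivialization of $C_1$ supplied by Lemma~\ref{lemma1} is no longer available on the index-$s$ stratum. The quadratic form $K_1\otimes K_1\to C_1$ over that stratum is however non-degenerate of signature $(s,s)$, and on a suitable double cover of $BG_s$ it induces a splitting of $K_1$ as a sum of rank-$s$ positive- and negative-definite subbundles, i.e.\ a version of the trivialization of Lemma~\ref{lemma1} that is available only after the double cover. I expect that combining this decomposition with a transfer argument along the double cover will force the relevant Euler class (in characteristic zero or odd) or top Stiefel-Whitney class (in characteristic two) to vanish, thereby yielding the desired cohomological splitting.
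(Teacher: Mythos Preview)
Your even-$q$ case matches the paper's.

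For odd $q$ your reduction to $H^*(\Y)\cong H^*(\X)\oplus H^*(\B_s)$ is correct, and the connecting homomorphism in the cofiber sequence $\X\to\Y\to\B_s$ does in fact vanish, but the paper does not establish this by analyzing that connecting map. Instead it works with the full cofiber sequence $\X\stackrel{j}{\to}\A_1\to\vee_{i>0}\B_i$ and builds an explicit right inverse to $j^*\colon H^*(\A_1)\to H^*(\X)$. The key is the additive decomposition $H^*(\BSO_{q+1})\cong P\oplus (e\smallsmile P)$, where $P$ is the polynomial algebra on Pontrjagin classes (on $w_2,\dots,w_{q+1}$ in characteristic~$2$) and $e$ is the Euler class. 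The summand $P$ is lifted to $H^*(\A_1)$ through the canonical map of Thom spectra $\A_1\to\MSO$; the summand $e\smallsmile P$ is lifted through the projection $\A_1\to\B_0$ (index zero, not index $s$) followed by the Thom isomorphism for the universal $(q{+}1)$-bundle. No analysis of the middle-index stratum, of $C_1|BG_s$, or of any double cover is needed.

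Your proposed route, by contrast, has a genuine gap. The assertion that under Thom isomorphisms the connecting homomorphism becomes ``multiplication by a characteristic class'' is not justified: the source and target sit over different bases ($\BSO_{q+1}$ versus $BG_s$), and there is no evident module structure making such an identification automatic. More seriously, the transfer along the double cover of $BG_s$ satisfies $p_*p^*=2$, which is zero over $\mathbb{F}_2$ and hence yields no information there; you acknowledge the difficulty but offer no substitute argument. As written, the odd-$q$ case is at best a plausible outline in characteristic $\ne 2$ and not a proof at all in characteristic $2$. The paper's construction via $\MSO$ and $\B_0$ sidesteps these issues entirely.
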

\begin{proof} The equation~(\ref{l:3}) gives rise to a long exact sequence 
\[
    \cdots\longrightarrow\oplus_{i>0} H^*(\B_i)\longrightarrow H^*(\A_1)\stackrel{j^*}\longrightarrow H^*(\X)\longrightarrow\cdots.
\]
To prove Theorem~\ref{th:2} it suffices to show that the homomorphism $j^*$ induced by the inclusion of $\X$ into $\A_1$ admits a right inverse. In the case where the dimension $q$ is even the existence of a right inverse of $j^*$ follows from Theorem~\ref{th:1}.

Suppose now that $q$ is odd. 

Let us recall that $\X$ is the Thom spectrum of a stable vector bundle over $\BSO(q+1)$. The cohomology ring of $\BSO(q+1)$ for $q$ odd with coefficients in a field $\mathfrak{K}$ of characteristic $\ne 2$ is isomorphic to 
\[
   H^*(\BSO(q+1); \mathfrak{K})\simeq \mathfrak{K}[p_1, p_2, ..., p_{\frac{q+1}{2}}, e]/(e^2-p_{\frac{q+1}{2}}),
\]
where $p_i$ and $e$ are the $i$-th Pontrjagin and Euler classes respectively. Consequently, as an additive group it is isomorphic to the sum of two polynomial groups 
\[
   H^*(\BSO(q+1); \mathfrak{K})\simeq \mathfrak{K}[p_1, p_2, ..., p_{\frac{q+1}{2}}]\oplus e\smallsmile \mathfrak{K}[p_1, p_2, ..., p_{\frac{q+1}{2}}].
\]

By the Thom isomorphism the spectrum cohomology group of $\X$ is isomorphic to a sum of groups of polynomials $P\oplus e\smallsmile P$, where $P$ is a polynomial group on Pontrjagin classes $p_1, ..., p_{\frac{q+1}{2}}$ if the coefficients are chosen to be in a field of characteristic $\ne 2$, and the polynomial group on Stiefel-Whitney classes $w_2,..., w_{q+1}$ otherwise. 

We will only give an argument in the case where the coefficient field is chosen to be of characteristic $0$. The argument in other cases is similar. 

We define $(j^*)^{-1}|P$ to be the composition
\[
     P\longrightarrow H^*(\BSO) \longrightarrow H^*(\MSO) \longrightarrow H^*(\A_1)
\] 
of the canonical inclusion, the Thom isomorphism, and the homomorphism induced by the canonical map $\A_1\to \MSO$ of Thom spectra; and the homomorphism $(j^*)^{-1}|e\smallsmile P$ to be the composition
\[
   e\smallsmile P \longrightarrow P\longrightarrow H^*({\BSO}_{q+1})\longrightarrow H^*(\B_0) \longrightarrow H^*(\A_1)
\]
of the canonical isomorphism, the canonical inclusion, the Thom isomorphism, and the homomorphism induced by the projection $\A_1\to \B_0$.  

It remains to show that for each $x\in P\oplus e\smallsmile P$, the restriction of the class $(j^*)^{-1}(x)$ to $\mathbf{X}$ is $x$. In the case where $x\in P$, this immediately follows from the definition of $(j^*)^{-1}(x)$. In the case where $x\in e\smallsmile P$, this follows from Lemma~\ref{l:2.4}.  
\end{proof}

\begin{remark} In \cite{Sa1} we study invariants of singular maps. In notation adopted in \cite{Sa1}, the classes in the image of $(j^*)^{-1}|P$ are called the generalized Miller-Morita-Mumford classes, while the classes in the image of $(j^*)^{-1}|P$ are classes $I_{Q,0}$. 
\end{remark}

\section{Geometric meaning of cohomology theories $\B_i$}\label{s:gm}

For each integer $i$ in the interval $[0, \left\lfloor \frac{q+1}{2}\right\rfloor]$, the cobordism class of a proper fold map $f: M\to N$ of dimension $q$ gives rise to a homotopy class $\mathfrak{o}_i(f)$ in $\mathbf{B}_i^0(N)$. Indeed, there is a projection $\A_1\to \B_i$ and $\mathfrak{o}_i(f)$ is defined to be the value under the induced map 
\[
    [\mathbf{S}\wedge N_+, \A_1] \longrightarrow [\mathbf{S}\wedge N_+, \B_i]
\] 
of the homotopy class $\mathop\mathrm{PT}(f)$ classifying by the Pontrjagin-Thom construction \cite{Sa} the cobordism class of the fold map $f$. 
We note that the class $\mathfrak{o}_i(f)$ is a {\it partial} obstruction to the existence of a homotopy of $f$ to a fold map without fold singularities of index $i$. In other words, if $f$ is homotopic to a fold map without fold singularities of index $i$, then, by the Pontrjagin-Thom construction, the class $\mathfrak{o}_i(f)$ is trivial. On the other hand, a priori, even if $\mathfrak{o}_i(f)$ is trivial, the fold singularity of index $i$ still may be essential for the homotopy class of $f$.  

\begin{remark} In slightly different terms invariants of fold maps given by $\mathfrak{o}_i(f)$ are defined and studied by Kalmar in \cite{Ka5} and his other papers. 
In a more general setting these obstructions are defined in \cite{Sa} and \cite{Sz}. Similar obstructions have been also defined earlier by Koschorke in \cite{Ko}
for morphisms of vector bundles. An equivariant
cohomology version of obstructions $\mathfrak{o}_i(f)$ is defined in \cite{FR}.
\end{remark}

To begin with let us explain the relation of $\B_i$ to the classifying space of an appropriate Lie group as has been promised in the Introduction. Let $BG_i$ denote the classifying space of the relative symmetry group $G_i$ of the fold singularity of index $i$ (see the list of groups $G_i$ for each $i$ in section~\ref{s:2}). 

\begin{proposition}\label{p:4.3} The spectrum $\B_i$ is equivalent to the suspension spectrum of the Thom space of $C_1|BG_i$. In particular, if either $q$ is even, or $q$ is odd and $i\ne \frac{q+1}{2}$, then the spectrum $\B_i$ is equivalent to the suspension spectrum of the topological space $S^1\wedge \BSO(i, q+1-i)_+$. 
\end{proposition}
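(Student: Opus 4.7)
The plan is to identify $\B_i$ directly as a Thom spectrum over $BG_i$ by a level-wise analysis of the defining cofiber sequence $\A_0\to\Sigma_\tau\to\B_i$, and then to trivialize the cokernel line bundle $C_1$ under the stated hypotheses.

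Let $\tau$ denote the set consisting of the regular map germ type together with the fold type of index $i$, so $\B_i=\Sigma_\tau/\A_0$ by definition. At each level $t$ the space $S_t(\tau)$ decomposes into the open stratum $A_{0,t}$ and the closed stratum $Z_{i,t}$ of fold germs of index $i$, whose normal bundle in $S_t(\tau)$ coincides with the kernel bundle $K_{1,t}$ (as recalled in the discussion preceding Lemma~\ref{l:3.6}). By the standard Thom-space cofibration formula, the cofiber at level $t+q$ is
\[
(\B_i)_{t+q}\;\simeq\;T\bigl(\pi_t^*\xi_t|_{Z_{i,t}}\oplus K_{1,t}\bigr).
\]
The splitting~(\ref{e:3}) gives $\pi_t^*\xi_t|_{Z_{i,t}}\cong J_{1,t}\oplus C_{1,t}$, where $J_{1,t}$ is the image bundle, while the differential provides a canonical isomorphism $K_{1,t}\oplus J_{1,t}\cong\varepsilon^{t+q}$. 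Combining these identifications yields $(\B_i)_{t+q}\simeq S^{t+q}\wedge T(C_{1,t})$, and passing to the colimit in $t$ produces the first assertion $\B_i\simeq\Sigma^\infty T(C_1|BG_i)$.

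For the explicit form, Remark~\ref{r:2.5} identifies $BG_i$ with $\BSO(i,q+1-i)$ whenever $i\ne(q+1)/2$, so it remains to show that $C_1$ is trivial in the stated cases. For $q$ even this is Lemma~\ref{lemma1}. For $q$ odd with $i\ne(q+1)/2$ the same idea works with a twist: the second-fundamental-form map $K_1\otimes K_1\to C_1$ is non-degenerate of signature $(i,q+1-i)$, and since $i\ne q+1-i$ the group $\SO(i,q+1-i)$ preserves the two eigenspace subbundles of $K_1$; the restriction of the form to the $i$-dimensional positive summand is then positive definite, canonically orienting $C_1$. With $C_1$ trivialized, $T(C_1|BG_i)\simeq S^1\wedge\BSO(i,q+1-i)_+$. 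The main obstacle is precisely this odd-$q$ triviality of $C_1$: the odd-dimension argument of Lemma~\ref{lemma1} is unavailable and one must exploit the signature asymmetry $i\ne q+1-i$ to orient the line bundle.
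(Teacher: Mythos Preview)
Your argument is correct and follows essentially the same route as the paper: both identify the stable bundle underlying $\B_i$ as $C_1|BG_i$ by adding the normal bundle $K_1$ of the fold stratum to the restriction of $\pi^*\xi$ and cancelling via the kernel/image/cokernel exact sequence. You carry this out level-by-level with the Thom-space cofiber formula $T(\pi_t^*\xi_t|_{Z_{i,t}}\oplus K_{1,t})$, whereas the paper packages the same cancellation stably through Lemma~\ref{l:14} (so that $K_1\oplus\pi|BG_i\simeq K_1-(K_1\ominus C_1)\simeq C_1$); these are the same computation in different wrapping. One place where you are actually more explicit than the paper: for odd $q$ with $i\ne(q+1)/2$ the paper simply asserts triviality of $C_1$ while citing Lemma~\ref{lemma1} (which literally only treats even $q$), whereas your signature-asymmetry argument---that $G_i=\SO(i,q+1-i)$ preserves the definite summands of $K_1$ and hence acts trivially on the cokernel direction---fills that in.
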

\begin{proof} The spectrum $\B_i$ is the spectrum of a stable vector bundle over $BG_i$; see Lemma~\ref{l:0}. This stable vector bundle is the sum of the universal vector bundle over $BG_i$ of dimension $q+1$ (see Lemma~\ref{l:2.4}) and the stable vector bundle $\pi|BG_i$ which by Lemma~\ref{l:14} is isomorphic to $-(K_1\ominus C_1)|BG_i$. Thus the stable vector bundle of the Thom spectrum $\mathbf{B}_i$ is the one of the line bundle $C_1|BG_i$. In the case where $q$ is even, or $q$ is odd and $i\ne \frac{q+1}{2}$ the line bundle $C_1|BG_i$ is trivial (see Lemma~\ref{lemma1}) and $G_i=\SO(i, q+1-i)$.  
\end{proof}


\begin{definition} A {\it framed immersion} (or a {\it framed $G_i$-immersion}) into a closed manifold $N$ is defined to be a dimension $-1$ immersion $h: S_i\to N$ together with a continuous map $j: S_i\to BG_i$ and an isomorphism $\gamma$ of $j^*C_1$ and the normal bundle of $h$. 
The notion of {\it cobordism classes} of framed immersions can be defined in a usual way. 
\end{definition}

Now we are in a position to give a geometric interpretation of classes $\mathfrak{o}$ in the group $\mathbf{B}_i^0(N)$.

\begin{proposition}\label{l:15}
For a smooth orientable manifold $N$, classes in $\mathbf{B}_i^0(N)$ are in bijective correspondence with cobordism classes of framed immersions into $N$. 
\end{proposition}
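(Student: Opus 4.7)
The plan is to combine Proposition~\ref{p:4.3} with the Pontrjagin--Thom construction for codimension-$1$ immersions to identify both sides directly. By Proposition~\ref{p:4.3}, the spectrum $\mathbf{B}_i$ is equivalent to the suspension spectrum of the Thom space $\mathrm{Th}(C_1|BG_i)$ of the line bundle $C_1$ over $BG_i$, so that
\[
   \mathbf{B}_i^0(N)\;=\;\lim_k[S^k\wedge N_+,\, S^k\wedge \mathrm{Th}(C_1|BG_i)],
\]
that is, the stable cohomotopy of $N_+$ with coefficients in $\mathrm{Th}(C_1|BG_i)$.

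Next, I would apply the geometric Pontrjagin--Thom construction in the stable immersion setting, as in Wells~\cite{We} and Koschorke~\cite{Ko}. Given a stable representative $f\colon S^k\wedge N_+\to S^k\wedge \mathrm{Th}(C_1|BG_i)$, make $f$ smooth and transverse to the zero section $BG_i$; the preimage $W\subset N\times\R^k$ is then a closed submanifold of codimension $1+k$, equipped with a classifying map $j\colon W\to BG_i$ and an identification of its normal bundle in $N\times\R^k$ with $j^*C_1\oplus\varepsilon^k$. Projecting $W\hookrightarrow N\times\R^k\to N$ generically produces an immersion $h\colon W\to N$ of codimension $1$ whose normal bundle $\nu(h)$ is canonically isomorphic to $j^*C_1$, i.e., a framed $G_i$-immersion. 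Conversely, given a framed $G_i$-immersion $(h,j,\gamma)$, for $k$ large enough the immersion lifts to an embedding $W\hookrightarrow N\times\R^k$ whose normal bundle is $j^*C_1\oplus\varepsilon^k$, and the Pontrjagin--Thom collapse yields a stable map into $S^k\wedge\mathrm{Th}(C_1|BG_i)$. Applying the same transversality argument to cobordisms of framed immersions (respectively, to homotopies of stable maps) shows that the two constructions descend to mutually inverse bijections on equivalence classes.

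The main obstacle is justifying the stable immersion form of the Pontrjagin--Thom theorem: namely, that cobordism classes of codimension-$1$ immersions into $N$ with prescribed normal-bundle data correspond bijectively to the stable cohomotopy of the relevant Thom space. This fact originates with Wells~\cite{We} and was developed in the framed / equivariant normal-bundle setting by Koschorke~\cite{Ko}, and the orientability hypothesis on $N$ is used to reconcile the oriented structure implicit in $G_i\subset \SO$ with the ambient orientations needed to pass back and forth between stable embeddings in $N\times \R^k$ and immersions in $N$.
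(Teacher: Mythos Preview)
Your strategy matches the paper's: both arguments amount to the Pontrjagin--Thom construction together with the h-principle for immersions. The paper unpacks the steps a bit more explicitly---from a class in $\mathbf{B}_i^0(N)$ it extracts a smooth map $h\colon S_i\to N$, a map $j\colon S_i\to BG_i$, and a stable identification $\nu_h\simeq j^*K_1\oplus j^*(-K_1\oplus C_1)$, then destabilizes to an unstable isomorphism $TS_i\oplus j^*C_1\approx h^*TN$ by a dimension count, and finally invokes the Smale--Hirsch theorem to replace $h$ by an honest immersion with normal bundle $j^*C_1$---whereas you appeal to the packaged Wells/Koschorke statement.

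One concrete correction to your sketch: the assertion that the projection $W\hookrightarrow N\times\R^k\to N$ is \emph{generically} an immersion is false. For $\dim W=n-1$ the locus where the vertical subspace $\R^k$ meets $TW$ nontrivially has expected codimension~$2$ in $W$, so for $n\ge 3$ a generic projection still has an $(n-3)$-dimensional singular set. Producing an immersion from the stable normal data is exactly the nontrivial input of Smale--Hirsch (equivalently, of Wells' theorem), not a transversality/general-position argument. Since you do cite Wells and Koschorke for this step, the overall outline stands once you drop the ``generically'' claim and invoke the h-principle directly, as the paper does.
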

\begin{proof}
The statement of Proposition~\ref{l:15} follows from the Pontrjagin-Thom construction and the Smale-Hirsch h-principle for immersions. Indeed, by 
the Pont\-rjagin-Thom construction, a class $\mathfrak{o}$ in $\mathbf{B}_i^0(N)$ gives rise to a quadruple $(S_i, h, j, \varphi)$ of 
\begin{itemize}
\item a smooth closed manifold $S_i$ of dimension $\dim N-1$,
\item a smooth map $h: S_i\to N$,
\item a continuous map $j: S_i\to BG_i$, and 
\item a decomposition of the normal bundle $\nu_h=h^*TN\ominus TS_i$ into a Whitney sum of two stable vector bundles by means of an isomorphism
\[
\varphi\colon \nu_h \longrightarrow j^*(K_1)\oplus j^*(- K_1\oplus C_1)
\] 
of $\nu_h$ and the pullbacks of two stable vector bundles over $BG_i$. 
\end{itemize}

In particular, 
there is an isomorphism of vector bundles
\[
    TS_i\oplus j^*C_1 \approx h^*TN
\] 
over $S_i$; this isomorphism is an isomorphism of unstable bundles because of a dimensional argument. Hence, by the Smale-Hirsch Theorem, there is an immersion $S_i\to N$ with normal bundle isomorphic to $j^*C_1$. Consequently, the class $\mathfrak{o}$ gives rise to a framed immersion into $N$. Thus we get a map assigning to each class $\mathfrak{o}$ a framed immersion. It is straightforward to show that this map induces a bijective correspondence between homotopy classes in $\mathbf{B}_i^0(N)$ and cobordism classes of framed immersions into $N$. 
\end{proof}

\section{Examples}

In this section we consider the cobordism group of fold maps into a sphere $S^n$ of dimension $n>1$. It is slightly more convenient to consider the reduced group $\tilde{\mathbf{A}}_1^0(S^n)$ rather than the unreduced one. We recall that the reduced group $\tilde{\A}_1^0(S^n)$ is defined to be 
\[
   \tilde{\A}_1^0(S^n)\colon = [\mathbf{S}\wedge S^n, \A_1]=\pi_n(\A_1).
\] 
The obstructions $\mathfrak{o}_i(f)$ are now replaced with reduced obstructions $\tilde{\mathfrak{o}}_i(f)\in \tilde{\B}_i^0(S^n)$ which are images of $[f]$ under the coefficient homomorphisms $\tilde{\A}_1^0(S^n)\to \tilde{\B}_i^0(S^n)$.

By Theorem~\ref{th:1} in the case of cobordism groups of equidimensional fold maps (i.e., in the case $q=0$), the spectrum $\mathbf{A}_1$ is equivalent to the spectrum $\mathbf{X}$, which in its turn is equivalent to the sphere spectrum $\mathbf{S}$. We immediately deduce the following proposition. 

\begin{proposition} The reduced cobordism group of equidimensional fold maps into a sphere $S^n$ is isomorphic to the stable homotopy group $\pi^{st}_{n}$ provided that $n>1$. 
\end{proposition}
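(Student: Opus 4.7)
The plan is to specialize Theorem~\ref{th:1} to the case $q=0$ and then identify the sole surviving wedge summand with the sphere spectrum.

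First I would observe that in dimension $q=0$ the fold singularity indices $i$ are constrained by $0\le i\le \lfloor (q+1)/2\rfloor=0$, so the only admissible fold index is $i=0$. Consequently the wedge $\vee_{i>0}\B_i$ appearing in Theorem~\ref{th:1} is empty, and that theorem collapses to the equivalence $\A_1\simeq \X$.

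Next I would identify $\X$ itself. By Lemma~\ref{l:4.6}, $\X$ is the Thom spectrum of the stable vector bundle of dimension $-q=0$ classified by the composition $X\simeq\BSO_{q+1}\to \{-q\}\times \BSO$. For $q=0$ the base $\BSO_1$ is a point (the group $\SO_1$ being trivial), and any stable vector bundle of dimension $0$ over a point is the trivial rank-$0$ bundle, whose Thom spectrum has $t$-th space $S^t$. Hence $\X\simeq\mathbf{S}$.

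Combining these two steps yields $\A_1\simeq\mathbf{S}$. Since $n>1$, Theorem~\ref{th:0} identifies the cobordism group of fold maps into $S^n$ with $[\mathbf{S}\wedge S^n_+,\A_1]$, and reducing gives
\[
\tilde{\A}_1^0(S^n)=[\mathbf{S}\wedge S^n,\A_1]=\pi_n(\A_1)=\pi_n(\mathbf{S})=\pi^{st}_n,
\]
as required. There is no real obstacle beyond bookkeeping; the only care needed is the combinatorial observation that the range $0<i\le\lfloor(q+1)/2\rfloor$ is empty when $q=0$, which is precisely what lets the splitting theorem collapse all the way down to $\mathbf{S}$.
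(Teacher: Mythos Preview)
Your proof is correct and follows essentially the same route as the paper: specialize Theorem~\ref{th:1} to $q=0$ so that the wedge $\vee_{i>0}\B_i$ is empty and $\A_1\simeq\X$, then identify $\X$ with the sphere spectrum. You supply slightly more detail than the paper does in justifying $\X\simeq\mathbf{S}$ (via $\BSO_1\simeq *$ and Lemma~\ref{l:4.6}), but the argument is the same.
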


By Theorem~\ref{th:1} in the case of cobordism groups of fold maps of dimension $q=2d$, the spectrum $\mathbf{A}_1$ is equivalent to $\mathbf{X}\bigvee \vee_i \mathbf{B}_i$ where the index $i$ ranges from $1$ to $d$. This allows us to express the reduced cobordism groups of fold maps of spheres in terms of stable homotopy groups of classifying spaces of Lie groups. 

\begin{proposition}\label{p:5.2} The reduced cobordism group of fold maps of dimension $q=2d$ into a sphere $S^n$, with $n>1$, admits a splitting isomorphism of groups
\[    
\sigma\colon   \tilde{\mathbf{A}}_1^0(S^n)\simeq \tilde{\mathbf{X}}^0(S^n)\oplus [\mathop{\oplus}_{i=1}^d \tilde{\mathbf{B}}_i^0(S^n)]\simeq  \tilde{\mathbf{X}}^0(S^n)\oplus [\mathop{\oplus}_{i=1}^d \pi^{st}_{n-1}\oplus \pi^{st}_{n-1}(BG_i)]
\]
where $BG_i$ is the relative symmetry group of the fold singularity of index $i$. 
\end{proposition}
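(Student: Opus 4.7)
The plan is to assemble the proposition from the splitting theorem for even $q$ together with the identification of the spectra $\B_i$ from Proposition~\ref{p:4.3}. First, since $q=2d$ is even and the range of $i$ in Theorem~\ref{th:1} is $1\le i\le \lfloor (q+1)/2\rfloor = d$, Theorem~\ref{th:1} yields an equivalence of spectra
\[
\A_1 \simeq \X \vee \B_1 \vee \cdots \vee \B_d.
\]
Applying the reduced functor $\pi_n(-) = [\mathbf{S}\wedge S^n,-]$ and using that stable homotopy groups split over a finite wedge of spectra produces the first isomorphism in the statement.

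Next, I identify each summand $\tilde{\B}_i^0(S^n)$. Because $q$ is even, Lemma~\ref{lemma1} guarantees that the cokernel line bundle $C_1|BG_i$ is trivial for every $i$, so Proposition~\ref{p:4.3} gives an equivalence $\B_i \simeq \Sigma^{\infty}(S^1\wedge BG_{i+})$. Therefore
\[
\tilde{\B}_i^0(S^n) \;=\; \pi_n\bigl(\Sigma^{\infty}(S^1\wedge BG_{i+})\bigr) \;=\; \pi^{st}_{n-1}(BG_{i+}),
\]
and the canonical pointed splitting $BG_{i+}\simeq BG_i\vee S^0$ yields
\[
\pi^{st}_{n-1}(BG_{i+}) \;\simeq\; \pi^{st}_{n-1} \oplus \pi^{st}_{n-1}(BG_i),
\]
which is exactly the $i$-th summand appearing on the right-hand side of the stated isomorphism.

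Combining the two steps and taking the direct sum over $i=1,\dots,d$ produces the desired $\sigma$. No genuine obstacle arises: the proposition is a direct consequence of Theorem~\ref{th:1} and Proposition~\ref{p:4.3}, and the only point worth explicitly checking is that the unstable form of Proposition~\ref{p:4.3} is available for every $i$ in the range $1\le i\le d$. This is automatic, since the exceptional index $(q+1)/2$ excluded in the second clause of Proposition~\ref{p:4.3} is not an integer when $q$ is even. The factor $\tilde{\X}^0(S^n)$ is left unsimplified, as $\X$ does not admit a comparably elementary description beyond its realization as the Thom spectrum of $\chi\ominus\varepsilon$ over $\BSO_{q+1}$.
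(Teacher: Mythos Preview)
Your proof is correct and follows essentially the same approach as the paper: both obtain the first isomorphism directly from the spectrum splitting of Theorem~\ref{th:1}, and both identify $\tilde{\B}_i^0(S^n)$ via Proposition~\ref{p:4.3}. The only cosmetic difference is that the paper phrases the splitting of $\pi^{st}_{n-1}((BG_i)_+)$ in terms of the explicit section--retraction pair $S^1\to S^1\wedge (BG_i)_+\to S^1$, whereas you invoke the equivalent stable splitting $(BG_i)_+\simeq BG_i\vee S^0$.
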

\begin{proof}
By Proposition~\ref{p:4.3} the spectrum $\mathbf{B}_i$ is equivalent to the suspension spectrum of the space $S^1\wedge (BG_i)_+$, where $(BG_i)_+$ is $BG_i\sqcup \{pt\}$. On the other hand 
there is an obvious inclusion $S^1\to S^1\wedge (BG_i)_+$ and a projection $S^1\wedge (BG_i)_+\to S^1$ whose composition is the identity map of $S^1$. Hence, for every manifold $N$, the group $\mathbf{B}_i^0(N)$ splits off $\mathbf{S}^1(N)$, where $\mathbf{S}$ is the sphere spectrum. More precisely, 
\[
   \tilde{\B}_i^0(S^n)\simeq \mathop{\oplus}_{i=1}^d [\pi^{st}_{n-1}\oplus \pi^{st}_{n-1}(BG_i)]
\]
\end{proof}

\begin{remark} Our proof of Proposition~\ref{p:5.2} shows that the cobordism group of fold maps of manifolds of dimension $n+q$ into an arbitrary manifold $N$ of dimension $n$ splits off the direct sum of $\left\lfloor \frac{q-1}{2}\right\rfloor$ copies of the $1$-st cohomology group $\mathbf{S}^1(N)$. This recoves (and improves) the main theorem in \cite{Ka2}.  
\end{remark}

\begin{remark} There is a fairly simple geometric argument \cite{Ka6} showing that the map $\sigma\times \mathfrak{o}_0$ is an injective homomorphism of $\tilde{\mathbf{A}}_1^0(S^n)$. Here $\mathfrak{o}_0$ stands for the homomorphism $\tilde{\mathbf{A}}_1^0(S^n)\to \tilde{\mathbf{B}}_0^0(S^n)$ that takes the class represented by a map $f$ onto $\mathfrak{o}_0(f)$.  
In the case where $-q$ is odd, a similar statement takes place for cobordism groups of so-called framed fold maps \cite{Ka6}.    
\end{remark}

In the case of cobordism groups of fold maps of dimension $q=2d+1$, by Theorem~\ref{th:4}, there is an equivalence of spectra $\mathbf{A}_1$ and $\mathbf{Y} \bigvee\vee_i \mathbf{B}_i$ with $i$ ranging from $1$ to $d$. As above we deduce the following proposition. 

\begin{proposition}\label{c:5.3} For the reduced cobordism group of fold maps of dimension $q=2d+1$ into $S^n$, with $n>1$, there is a splitting isomorphism of groups
\[    
   \tilde{\mathbf{A}}_1^0(S^n)\simeq \tilde{\mathbf{Y}}^0(S^n)\oplus [\mathop{\oplus}_{i=1}^d \tilde{\mathbf{B}}_i^0(S^n)]\simeq  \tilde{\mathbf{Y}}^0(S^n)\oplus [\mathop{\oplus}_{i=1}^d \pi^{st}_{n-1}\oplus \pi^{st}_{n-1}(BG_i)]
\]
where $BG_i$ is the relative symmetry group of the fold singularity of index $i$. 
\end{proposition}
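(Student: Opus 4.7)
The plan is to mirror the proof of Proposition~\ref{p:5.2}, substituting Theorem~\ref{th:4} for Theorem~\ref{th:1}. Since $q=2d+1$, the middle index is $s=(q+1)/2=d+1$, and Theorem~\ref{th:4} provides an equivalence of spectra $\mathbf{A}_1\simeq \mathbf{Y}\vee \bigvee_{i\ne 0,s}\mathbf{B}_i$. The integers $i$ in the range $[0,\lfloor(q+1)/2\rfloor]=[0,d+1]$ different from $0$ and $s=d+1$ are precisely $i=1,\ldots,d$, so the wedge simplifies to $\mathbf{A}_1\simeq \mathbf{Y}\vee \bigvee_{i=1}^{d}\mathbf{B}_i$.

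Next I would apply the functor $\tilde{(\cdot)}^0(S^n)=\pi_n(\cdot)$. Since stable homotopy turns wedges of spectra into direct sums, this yields the first claimed isomorphism
\[
\tilde{\mathbf{A}}_1^0(S^n)\simeq \tilde{\mathbf{Y}}^0(S^n)\oplus \bigoplus_{i=1}^{d}\tilde{\mathbf{B}}_i^0(S^n).
\]

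For the second isomorphism I would apply Proposition~\ref{p:4.3} to each $\mathbf{B}_i$ with $1\le i\le d$. Because these indices avoid both $0$ and $s=d+1$, Proposition~\ref{p:4.3} identifies $\mathbf{B}_i$ with the suspension spectrum of $S^1\wedge \BSO(i,q+1-i)_+$. Exactly as in the proof of Proposition~\ref{p:5.2}, the canonical basepoint inclusion $S^1\hookrightarrow S^1\wedge (BG_i)_+$ and the projection $S^1\wedge (BG_i)_+\to S^1$ (collapsing $BG_i$ to a point) compose to the identity on $S^1$. This retraction splits off a copy of the sphere spectrum from each $\mathbf{B}_i$ and gives $\tilde{\mathbf{B}}_i^0(S^n)\simeq \pi^{st}_{n-1}\oplus \pi^{st}_{n-1}(BG_i)$, which assembled together produces the second displayed isomorphism.

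There is no genuine obstacle here; the argument is essentially indexing bookkeeping. The key feature making everything work is that the middle index $s=d+1$, where the cokernel line bundle $C_1|BG_s$ can fail to be trivial and the simplified form of Proposition~\ref{p:4.3} would not apply, is absorbed into $\mathbf{Y}$ by Theorem~\ref{th:4} rather than contributing a separate summand $\mathbf{B}_s$. Consequently every $\mathbf{B}_i$ that enters the direct sum falls under the second assertion of Proposition~\ref{p:4.3}, and the splitting proceeds verbatim as in the even-dimensional case of Proposition~\ref{p:5.2}.
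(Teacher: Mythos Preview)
Your proposal is correct and follows the same approach as the paper, which simply states that the proposition is deduced ``as above'' from Theorem~\ref{th:4} via the argument of Proposition~\ref{p:5.2}. Your explicit check that the indices $1\le i\le d$ avoid the exceptional value $s=(q+1)/2=d+1$, so that the second clause of Proposition~\ref{p:4.3} applies to each remaining $\mathbf{B}_i$, is exactly the point needed to make the parallel with the even case go through.
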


The term $\tilde{\mathbf{Y}}^0(S^n)$ in Proposition~\ref{c:5.3} fits the exact sequence 
\[
   \tilde{\mathbf{A}}_0^0(S^n)\longrightarrow \tilde{\mathbf{Y}}^0(S^n) \longrightarrow \tilde{\mathbf{B}}^0_0(S^n)\oplus \tilde{\mathbf{B}}^0_s(S^n), \quad \mathrm{with}\quad s=\frac{q+1}{2},
\]
associated to the cofiber sequence 
\[
    \mathbf{A}_0\longrightarrow \mathbf{Y}\longrightarrow \mathbf{B}_0\vee \mathbf{B}_s.
\]
In some cases this allows us to simplify computations of $\mathbf{Y}(S^n)$. 

\begin{corollary} The reduced cobordism class of a fold map $f$ of dimension $1$ into $S^4$ is determined by reduced obstructions $\tilde{\mathfrak{o}}_0(f)$ and $\tilde{\mathfrak{o}}_1(f)$. 
\end{corollary}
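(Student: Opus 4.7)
The plan is to reduce the corollary to the vanishing of a single stable stem. The key observation is that for $q = 1$ Theorem~\ref{th:4} degenerates: we have $s = (q+1)/2 = 1$, so the index set $\{i : 1 \le i \le \lfloor (q+1)/2 \rfloor,\ i \ne 0, s\}$ is empty, and the theorem already asserts an equivalence $\A_1 \simeq \mathbf{Y}$. Under this equivalence the pair $(\tilde{\mathfrak{o}}_0(f), \tilde{\mathfrak{o}}_1(f))$ is the image of $[f]$ under the map induced by the second arrow of the cofiber sequence $\A_0 \to \mathbf{Y} \to \B_0 \vee \B_s$ recalled immediately above the corollary. Thus the task is to show that the homomorphism
\[
\tilde{\mathbf{Y}}^0(S^4) \longrightarrow \tilde{\mathbf{B}}_0^0(S^4) \oplus \tilde{\mathbf{B}}_1^0(S^4)
\]
is injective.

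Next I would feed that cofiber sequence into the long exact sequence of reduced cohomology of $S^4$. Exactness at $\tilde{\mathbf{Y}}^0(S^4)$ identifies the kernel of the map in question with the image of $\tilde{\A}_0^0(S^4) \to \tilde{\mathbf{Y}}^0(S^4)$, so it suffices to verify that $\tilde{\A}_0^0(S^4)$ vanishes.

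This last vanishing is immediate from the Corollary following Proposition~\ref{l:2}, which identifies $\A_0$ with the Thom spectrum of the stable vector bundle $-\xi_q \colon \BSO_q \to \{-q\}\times\BSO$. For $q = 1$ the group $\SO_1$ is trivial, so $\BSO_1$ is a point and $\A_0 \simeq \Sigma^{-1}\mathbf{S}$. Hence
\[
\tilde{\A}_0^0(S^4) = \pi_4(\Sigma^{-1}\mathbf{S}) = \pi_5^{st},
\]
and $\pi_5^{st} = 0$ is the well-known value of the fifth stable stem. There is no real obstacle here: the whole corollary is a combination of Theorem~\ref{th:4} in the degenerate case $d = 0$ with this vanishing, and the only care required is to apply the Puppe sequence in the correct direction so that the relevant connecting term lands in $\tilde{\A}_0^0(S^4)$ rather than in some shift.
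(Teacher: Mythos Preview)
Your proof is correct and follows essentially the same route as the paper: both use the cofiber sequence $\A_0 \to \mathbf{Y} \to \B_0 \vee \B_1$ (with $\A_1 = \mathbf{Y}$ for $q=1$) and the vanishing $\tilde{\A}_0^0(S^4) \cong \pi_5^{st} = 0$. The paper additionally invokes $\pi_4^{st} = 0$ to upgrade injectivity to an isomorphism, but that extra input is not needed for the corollary as stated.
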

\begin{proof}
In the case of maps into $S^4$ we obtain an exact sequence
\[
   \pi^{st}_5 \longrightarrow \tilde{\mathbf{Y}}({S}^4) \longrightarrow \tilde{\mathbf{B}}_0^0({S}^4)\oplus \tilde{\mathbf{B}}_1^0({S}^4)\longrightarrow \pi^{st}_4.
\] 
It is known that $\pi^{st}_4=\pi^{st}_5=0$. Thus $\tilde{\mathbf{Y}}^0(S^4)$ is isomorphic to the sum of two reduced cobordism groups of framed immersions.   
\end{proof}

\section{Splitting of the spectrum $\mathbf{hW}$ of Madsen and Weiss}\label{s:MW}

In this section we will assume that $q$ is even and clarify the relation of our result to the homotopy cofiber sequence
 of spectra
\begin{equation}\label{eq:t}
  \mathbf{hV}\longrightarrow \mathbf{hW}\longrightarrow \mathbf{hW_{loc}}
\end{equation}
considered by Madsen and Weiss (for the notation we refer the reader to the paper \cite{MW}; the spectra equivalent to those in the cofibration (\ref{eq:t}) are also defined below). Namely, by \cite[Lemma 3.1.1]{MW} there is a splitting of the spectrum $\mathbf{hW_{loc}}$, 
\[
    \mathbf{hW_{loc}}\simeq \B_0\vee \B_1\vee \cdots \vee \B_{q+1}.
\] 
We will show that the cofibration (\ref{eq:t}) is almost trivial. More precisely, the spectrum $\mathbf{hW}$ splits as
\[
    \mathbf{hW}\simeq \B_0\vee \B_1\vee \cdots \vee \B_q\vee \mathbf{X},
\]
and the second map in the cofibration (\ref{eq:t}) takes each of the first $q$ factors of $\mathbf{hW}$ identically onto the corresponding factor of $\mathbf{hW_{loc}}$ while $\mathbf{X}$ and $\B_{q+1}$ fit the homotopy cofibration sequence of spectra
\[
    \mathbf{hV}\longrightarrow \mathbf{X}\longrightarrow \B_{q+1}. 
\]

We recall that the spectra $\A_1$ and $\A_0$ coincide with the spectra of the stable vector bundles $\pi|A_1$ and $\pi|A_0$ respectively. Let $A_1'$ denote the non-Hausdorff space obtained from $A_1\sqcup A_1$ by identifying the corresponding points in the two copies of $A_0$. Then there is a projection 
\[
   \rho\colon A_1'\longrightarrow A_1
\]
such that its restrictions to $\rho^{-1}(A_0)$ and $\rho^{-1}(A_1\setminus A_0)$ are a bijection and a trivial double cover respectively. For each stratum $\Sigma$ in $A_1$ of Morse map germs of index $i$, the inverse image $f^{-1}(\Sigma)$ has two components in $A_1\sqcup A_1/\sim$. These components are said to be the strata in $A'_1$ of Morse map germs of indexes $i$ and $q+1-i$ respectively. 

The composition $\pi\circ \rho|A_1'$ itself is a stable vector bundle over $A_1'$ whose spectrum is denoted by $\A_1'$. 

\begin{lemma}\label{l:spl} The spectra $\A_1'$ and $\mathbf{hW}$ are equivalent. 
\end{lemma}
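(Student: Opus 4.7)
The plan is to realize Madsen--Weiss's spectrum $\mathbf{hW}$ as the Thom spectrum $\Sigma_{\tau'}$ associated to the set $\tau'$ of singularity types of \emph{cooriented} regular and fold map germs of dimension $q$, where a coorientation on a fold germ is a choice of orientation of the canonical cokernel line bundle $C_1$. By Proposition~\ref{r:3.1} applied to $\tau'$, this spectrum is the Thom spectrum of a stable vector bundle $S(\tau') \to \{-q\}\times\BSO$. The proof then reduces to exhibiting a homeomorphism between $S(\tau')$ and $A_1'$ under which the map to $\BSO$ corresponds to $\pi\circ\rho$, since by construction $\A_1'$ is the Thom spectrum of the latter.

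First I would identify $S(\tau')$ with $A_1'$ as a topological space. Over the regular stratum $A_0 \subset A_1$ the cokernel bundle is zero-dimensional, so a coorientation is vacuous; accordingly, the cooriented regular stratum coincides with $A_0$. Over the Morse stratum $Z = A_1 \setminus A_0$ the cokernel $C_1$ is a line bundle, and by Lemma~\ref{lemma1} (using that $q$ is even) this line bundle is trivial, so its orientation double cover is the trivial double cover $Z \sqcup Z \to Z$. In the local Morse normal form $\pm x_n^2 \pm \cdots \pm x_{n+q}^2$, reversing the coorientation of the cokernel negates all the quadratic signs, turning a fold germ of index $i$ into one of index $q+1-i$; since $q$ is even, these two indices are always distinct. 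This matches the labelling of the two components of $\rho^{-1}(Z)$ prescribed in the definition of $A_1'$. Gluing these cooriented strata along their common degeneration onto $A_0$ produces precisely the non-Hausdorff pushout $A_1\cup_{A_0}A_1 = A_1'$, and the forgetful map $S(\tau') \to A_1$ coincides with $\rho$.

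Second I would match the stable vector bundles. The stable normal bundle of a cooriented fold map depends only on the underlying oriented fold map, since coorientation is an additional choice of trivialization of an already-existing line bundle. Hence the universal stable vector bundle over $S(\tau')$ is the pullback of $\pi \colon A_1 \to \{-q\}\times\BSO$ along the forgetful map, which under the identification of the previous paragraph equals $\pi \circ \rho$. Therefore $\mathbf{hW} \simeq \Sigma_{\tau'}$ is the Thom spectrum of $\pi\circ\rho$, which is $\A_1'$ by definition. The main technical point will be verifying that the non-Hausdorff topology on $S(\tau')$ inherited from the closure relations among cooriented singularity types agrees with the pushout topology on $A_1'$; this reduces to checking that a one-parameter family of cooriented fold germs of index $i$ degenerating to a regular germ is identified, in $S(\tau')$, with the corresponding family of cooriented fold germs of index $q+1-i$ degenerating to the same regular germ, exactly as in the identification of the two copies of $A_0$ inside $A_1 \sqcup A_1$.
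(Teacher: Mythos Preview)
Your identification of the ``cooriented fold germ'' space $S(\tau')$ with the non-Hausdorff double $A_1' = A_1\cup_{A_0} A_1$ is correct and is a clean way to understand why $A_1'$ is the right object: the forgetful map $S(\tau')\to A_1$ is an isomorphism over $A_0$ and the orientation double cover of the cokernel line $C_1$ over $Z$, which is trivial by Lemma~\ref{lemma1} since $q$ is even. Your remark that swapping the coorientation exchanges indices $i$ and $q{+}1{-}i$ also matches the paper's labelling of the two sheets of $\rho^{-1}(Z)$.

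The gap is at the very first step. You take ``$\mathbf{hW}\simeq \Sigma_{\tau'}$'' as input, relying on the informal sentence in the Introduction that $\mathbf{hW}$ is ``a version of $\mathbf{A}_1$ for cooriented fold maps.'' But that sentence is a slogan, not a theorem; the actual definition of $\mathbf{hW}$ in \cite{MW} is via the spaces $\GW(q{+}1,n)$, and the content of Lemma~\ref{l:spl} is precisely to connect the Madsen--Weiss model to the author's $\Sigma_\tau$ framework. Your argument proves $\Sigma_{\tau'}\simeq \A_1'$ but never touches the $\GW$ spaces, so the comparison with $\mathbf{hW}$ is simply missing. Note also that $\tau'$ is not a set of singularity types in the sense of \S\ref{s:2} (a coorientation is extra structure, not part of the $\mathcal{A}$-equivalence class), so Proposition~\ref{r:3.1} does not literally apply; you are tacitly extending the $\Sigma_\tau$ formalism, which is fine but needs to be said.

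By contrast, the paper's proof engages with \cite{MW} directly: it records that both $\lim\GW(q{+}1,n)$ and $A_1'$ are stratified with one stratum $\simeq \BSO_q$ (regular germs) and one stratum $\simeq \BSO(i,q{+}1{-}i)$ for each $i=0,\dots,q{+}1$, cites \cite[Lemma 3.1.1 and p.~20]{MW} together with Lemmas~\ref{l:0} and~\ref{l:2.4} to identify the normal bundles of the Morse strata on both sides as the universal $G_i$-bundle, and then builds the equivalence stratum by stratum, extending over the open regular stratum by the classifying map for $\xi_q$. To complete your approach you would have to carry out essentially this same stratification comparison between $\lim\GW(q{+}1,n)$ and your $S(\tau')$, so the work is shifted rather than avoided.
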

\begin{proof} In notation of the paper \cite{MW}, the spectrum $\mathbf{hW}$ is the spectrum of a stable vector bundle over a space $\lim \GW(q+1,n)$. The spaces $\lim \GW(q+1,n)$ and $A_1'$ are stratified. Each of the two spaces has a stratum homotopy equivalent to $\BSO_q$, which corresponds to regular map germs; and one stratum homotopy equivalent to $\BSO(i, q+1-i)$, which corresponds to the singularity type of Morse map germs on $(\R^{q+1},0)$ of index $i=0,...,q+1$ (see Lemma~\ref{l:0} and \cite[Lemma 3.1.1]{MW}). 

The normal bundle of each stratum of Morse map germs is the universal vector bundle (see Lemma~\ref{l:2.4} and \cite[page 20]{MW}), and therefore there is a map from a neighborhood of strata of Morse map germs in $A_1'$ to a corresponding neighborhood in $\lim \GW(q+1,n)$. This map can be extended over the remaining stratum homotopy equivalent to $\BSO_q$ by means of the map classifying the universal vector bundle over $\BSO_q$. 
\end{proof}

In fact the proof of Lemma~\ref{l:spl} shows that there is a commutative diagram of cofibrations

\[
\begin{CD}
\mathbf{hV} @>>> \mathbf{hW} @>>> \mathbf{hW_{loc}} \\
@V\simeq VV  @V\simeq VV @V\simeq VV\\
\A_0 @>>> \A_1' @>>> \A_1'/\A_0. 
\end{CD}
\]

Let $\mathbf{X'}$ be a subspectrum of the spectrum $\A_1'$ that corresponds to regular map germs and Morse map germs of index $q+1$. In other words $\X'$ is a spectrum of a stable vector bundle over a subspace of $A_1'$ of regular map germs and Morse map germs of index $q+1$. We observe that $\mathbf{X'}$ is equivalent to the spectrum $\mathbf{X}$. There is a homotopy cofiber sequence of spectra
\begin{equation}\label{eq:8.1}
     \mathbf{X} \simeq \mathbf{X'}\longrightarrow \A_1'\longrightarrow \B_0\vee \cdots \vee \B_q. 
\end{equation}
By an argument in the proof of Theorem~\ref{th:1}, there exists a map $\A_1'\to \mathbf{X'}$ that is right inverse to the map $\mathbf{X'}\to \A_1'$ in the homotopy cofiber sequence (\ref{eq:8.1}). Thus, we conclude that 
\[
   \mathbf{hW}\simeq \A_1'\simeq \B_0\vee \cdots \vee \B_q\vee \mathbf{X}. 
\]
\begin{remark}\label{r:8.1} The comparison of the homotopy fiber of the maps $\A'_1\to \A'_1/\A_0$ and $\A_1\to \A_1/\A_0$ shows that the homotopy fiber of the map $\mathbf{hW}\to \mathbf{hW_{loc}}$ is equivalent to the homotopy fiber of the map $\A_1\to \A_1/\A_0$. 
\end{remark}

\section{Further applications}\label{s:10}

\subsection{Cobordism groups of Morin maps}
As has been mentioned all invariants of cobordism groups of fold maps are either cobordism classes of framed immersions or cohomology classes in $\mathbf{X}(\cdot)$ or $\mathbf{Y}(\cdot)$ related to diffeomorphism groups of manifolds. In general these invariants are too difficult to compute. On the other hand, in a forthcoming paper \cite{Sa1} we use the results of the current paper to study rational invariants of cobordism groups of so-called Morin maps, which are maps with singularities of a more general type than that of fold ones.

\end{document}